\newcommand{\Frac}{\displaystyle \frac}
\newcommand{\Rn}{\mathbb{R}^N}
\newcommand{\R}{\mathbb{R}}
\newcommand{\Intrn}{\displaystyle \int_{\mathbb{R}^N}}
\begin{document}

\title{Mini-Max Algorithm via Pohozaev Manifold%\thanks{Grants or other notes
%about the article that should go on the front page should be
%placed here. General acknowledgments should be placed at the end of the article.}
}
%\subtitle{Do you have a subtitle?\\ If so, write it here}

%\titlerunning{Short form of title}        % if too long for running head

\author{L.A. Maia \and D. Raom \and R. Ruviaro \and \\Y. D. Sobral %etc.
}

%\authorrunning{Short form of author list} % if too long for running head

\institute{L.A. Maia, R. Ruviaro, Y.D. Sobral \at
              Departamento de Matem\'atica, Universidade de Bras\'ilia, Campus Universit\'ario Darcy Ribeiro, 70910-900 Bras\'ilia-DF, Brazil \\
              Tel.: +55-61-31076478\\
              Fax: +55-61-31076481 \\
              \email{lilimaia@unb.br, ruviaro@unb.br, ydsobral@unb.br}           %  \\
%             \emph{Present address:} of F. Author  %  if needed
           \and
            D. Raom \at
            Departamento de Engenharia Mec\^anica, Universidade de Bras\'ilia, Campus Universit\'ario Darcy Ribeiro, 70910-900 Bras\'ilia-DF, Brazil \\
              Tel.: +55-61-31075503\\
              Fax: +55-61-31075503\\
              \email{danielraoms@hotmail.com}
}
\date{}
% The correct dates will be entered by the editor

\maketitle

\begin{abstract}
A new algorithm for solving non-homogeneous asymptotically linear and superlinear  problems is proposed. The ground state solution of the problem, which in general is obtained as a mini-max of the associated functional, is obtained as the minimum of the functional constrained to the Pohozaev manifold instead. Examples are given of the use of this method for finding numerical 
radially symmetric positive
 solutions depending on various parameters.
\vspace{0.3cm}
\\%
\textbf{Mathematics Subject Classification} 35J20 $\cdot$ 35J61 $\cdot$ 35J10 $\cdot$ 65N99 $\cdot$ 65N22
% \PACS{PACS code1 \and PACS code2 \and more}
% \subclass{MSC code1 \and MSC code2 \and more}
\end{abstract}

\section{Introduction}

The celebrated Mountain Pass Theorem of Ambrosetti and Rabinowitz \cite{AR} has been widely used in the past forty-five years for finding weak solutions of semilinear elliptic problems as critical points of an associated functional. Solutions are found on the mini-max levels of the functional. 

A numerical approach of this theorem was first introduced by Choi and McKenna in \cite{YP}. Their work showed that, when carefully implemented, the algorithm is globally convergent and leads to a solution with the required mountain pass property. 

Later, Chen, Ni and Zhou in \cite{GJW} observed that this algorithm may converge to a solution with Morse index greater or equal to two, and not to the ground state mountain pass level. In order to circumvent this limitation, they created a new algorithm based on the fact that the minimum of the associated functional constrained to the Nehari manifold is equal to the mini-max level obtained by the Mountain Pass Theorem. This equivalence follows when the nonlinear terms in the equation are superquadratic \cite{Ding,RAB,Wi}.
For the asymptotically linear problem this is not true in general. However, more recently, the ground state level was shown to be equal to the minimum of the functional restricted to the Pohozaev manifold (see Jeanjean and Tanaka \cite{JT}).

Our new algorithm is based on this analytical result. To the best of our knowledge, this is the first time in the literature that an algorithm based on this idea is constructed. 
Summarizing, the idea is to
	replace the minimization on the
	Nehari manifold (studied in the literature both analytically and numerically) by the minimization on the
	Pohozaev manifold (not studied numerically in the literature yet). The similarity of
	both concepts is best illustrated by possible descriptions of these two manifolds using a suitable scaling:
	for a given function $u \not = 0$ there is $t > 0$ such that $tu$ lies on the Nehari manifold and $u(\cdot/t)$ lies on the 
	Pohozaev manifold, in respective contexts.
We obtain, numerically, positive solutions for a semilinear problem and in particular, for the asymptotically linear case, which in turn was not tackled by previous algorithms in the literature - using the important fact proved by Pohozaev \cite{P} that any weak solution of an elliptic equation of type
\begin{equation}
\label{pri}
\left\{
\begin{array}{rllr}
\vspace{0.2cm}
&-\Delta u = g(u) \;\; \mbox{ in} \quad\Rn\;,\\

&u \in H^1(\Rn)\;,
\end{array}
\right.
\end{equation}
must satisfy the  Pohozaev identity \cite{P,Wi}
\begin{equation}\label{pohozaevid}
(N-2)\int_{\Rn}|\nabla u|^2dx= 2N \int_{\Rn}G(u)dx,
\end{equation}
where 
\begin{equation*}
G(u)=\displaystyle\int^u_0g(t)dt.
\end{equation*}

	We observe that, under very general hypotheses as in \cite{bl} the ground state solution is radially symmetric, therefore we are going to implement our algorithm in this setting of functions.
	
\section{Theoretical Background}

We consider the semilinear elliptic problem
\begin{equation}\label{prob}
\left\{
\begin{array}{rllll}
\vspace{0.2cm}
&-\Delta u+ \lambda u = f(u) \;\; \mbox{ in} \quad \Rn \;,\\

&u \in H^1(\Rn)\;,
\end{array}
\right.
\end{equation}
where $N\geq 3$ and $\lambda$ is a positive constant. Let $F(u)= \displaystyle\int^u_0f(t)dt$ and the associated functional to this problem defined in $H^1(\Rn)=W^{1,2}(\Rn)$ be
\begin{equation}\label{functional}
I(u)=\frac{1}{2}\displaystyle\int_{\Rn}(|\nabla u|^2+\lambda u^2)dx-\int_{\Rn} F(u)dx.
\end{equation}
Moreover, the functional (\ref{functional}) is well defined and $I\in C^1(H^1(\Rn), \mathbb{R})$ with
\begin{equation*}
I'(u)\varphi=\displaystyle \int_{\Rn}(\nabla u \nabla \varphi+ \lambda u \varphi) dx- \int_{\Rn}f(u)\varphi dx, \;\text{for all} \; \varphi \in H^1(\Rn).
\end{equation*}

Weak solutions $u$ of problem (\ref{prob}) are precisely the critical points of $I$, i.e, $I'(u)=0.$ We will assume that $f$ satisfies:
\begin{enumerate}
\item[$(f_1)$] $f\in C^1[0, +\infty)$;
\item[$(f_2)$] $f(u)=o(u)$ as $u\rightarrow 0$;
\item[$(f_3)$] There is a positive constant $a< \lambda$ such that
\begin{equation*}
\frac{f(u)}{u} \rightarrow a \;\;\;\; \text{or} \;\;\;\; \frac{f(u)}{u} \rightarrow + \infty, \;\;\text{as}\;\; u\rightarrow+ \infty; \;\; 
\end{equation*}
\item[$(f_4)$] There exist positive constants $a_1$ and $a_2$ such that
\begin{equation*}
|f'(u)|\leq a_1+a_2|u|^{p-2},
\end{equation*}
with $2<p<2^*:= \displaystyle \frac{2N}{N-2}$, if $N\geq 3$.
\end{enumerate}

\vspace{12pt}

%{\color{red} QUEM É $2^*$??? ==> the critical Sobolev exponent to 2^*=\frac{2N}{N-2}, for N\geq 3 and 2^* = +\infty for N=2}\\

%{\color{red} Moreover, we assume a non-quadraticity condition which is natural for asymptotically linear problems at \infty (REFERENCE).}

Furthermore, we require that:

\begin{equation}
\left\{\begin{array}{rcll}\label{NQ}
\displaystyle \lim_{u\rightarrow +\infty}\left(\frac{1}{2}f(u)u-F(u)\right)&=& +\infty,&\\
\displaystyle \frac{1}{2}f(u)u-F(u)&>&0,& \;\; \forall\; u\in \mathbb{R}^+ \setminus \{0\}.
\end{array}\right.
\end{equation}

Without loss of generality, we may consider that $f \in C^1(\mathbb{R})$ is an odd function because we are focused on finding positive solutions. 

The first case of assumption $(f_3)$ implies that the problem is asymptotically linear at infinity and that the well-known Ambrosetti and Rabinowitz condition \cite{AR}
\begin{equation*}
	0<\theta F(u)\leq uf(u), \;\text{for some} \; \theta>2,
\end{equation*}
is not satisfied. We recall that any solution of (\ref{prob}) satisfies Pohozaev Identity  (\ref{pohozaevid}), in which

\begin{equation}\label{Gdef}
	G(u):=\displaystyle -\frac{\lambda}{2}u^2+F(u).
\end{equation}
We will further assume that: \\
 $(g_1)$ $\exists \; \xi>0$ such that $\displaystyle G(\xi)=\int^\xi_0g(s)ds>0.$

Let us define the Pohozaev manifold by
\begin{equation*}
\mathcal{P}=\{u\in H^1(\Rn)\setminus \{0\}:J(u)=0\},
\end{equation*}
with
\begin{equation}\label{j12}
J(u):=\displaystyle\int_{\mathbb{R}^N}|\nabla u|^2dx- \frac{2N}{N-2}\int_{\mathbb{R}^N} G(u)dx,
\end{equation}
and the constrained minimum of $I$ on $\mathcal{P}$ by
\begin{equation*}
m_{\mathcal{P}}:=\min_{u\in \mathcal{P}}I(u).
\end{equation*}

\begin{remark}\label{obs01}
Assumption $(g_1)$  implies that  $\mathcal{P}\neq \emptyset$. The proof of this fact is going to be postponed, subsequent to Lemma \ref{A1} (see Jeanjean and Tanaka, \cite{JT}).
\end{remark}
\vspace{0.2cm}

In the following, we will use the notation
$$
\left\langle u,v\right\rangle :=\int_{\mathbb{R}^{N}}\left(  \nabla
u\cdot\nabla v+ uv\right) \,dx  \text{\quad} \text{and}  \text{\quad}\Vert u\Vert^{2}%
:=\int_{\mathbb{R}^{N}}\left(  |\nabla u|^{2}+ u^{2}\right) \,dx 
$$
for the inner product and norm in the Hilbert space
$H^{1}(\mathbb{R}^{N})$, respectively.

We recall that $I$ in (\ref{functional}) satisfies the Palais-Smale condition \cite{palais} at level $c \in \mathbb{R}$ ((PS)$_c$ for short) if any sequence $\{u_n\} \subset H^1(\Rn)$ such that $I(u_n) \to c$ and $\|I'(u_n)\|_{{H^{-1}}} \to 0$ contains a convergent subsequence, where $H^{-1}$ is the dual space of $H^{1}(\mathbb{R}^{N})$. Furthermore, let us review the  Mountain Pass Theorem of Ambrosetti and Rabinowitz \cite{AR}.\\

\begin{theorem}\label{mountainpass}
Assume $I\in C^1 (H^1(\Rn),\mathbb{R})$ such that, $I(0)=0$ and\\

$(I_1)$ there exist constants $\rho, \alpha>0$ such that $I|_{\partial B_{\rho}(0)}\geq\alpha,$ and\\

$(I_2)$ there exists an $e\in H^1(\Rn) \setminus \overline {B_{\rho}(0)}$ and $I(e)\leq 0$.
Define
\begin{equation}\label {defc}
c:=\inf_{\gamma \in \Gamma}\max_{u\in\gamma([0,1])}I(u),
\end{equation} 
where
\begin{equation*}
\Gamma=\{\gamma\in C([0,1], H^1(\Rn))| \gamma(0)=0, \gamma(1)=e\}.
\end{equation*}
Then, if $I$ satisfies $(PS)_c$, the level $c$ is a critical level of $I$, i.e, there exists $u\in H^1(\Rn)$ such that $I(u)=c$ and $I'(u)=0.$
\end{theorem}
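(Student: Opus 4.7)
The plan is to run the classical deformation-argument proof: first establish that $c$ is a well-defined, strictly positive number, then show by contradiction that it must be a critical level.

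First I would verify that $c \geq \alpha$, which in particular gives $c \in \mathbb{R}$ so the statement makes sense. For any $\gamma \in \Gamma$, the continuous real function $t \mapsto \|\gamma(t)\|$ satisfies $\|\gamma(0)\| = 0 < \rho$ and $\|\gamma(1)\| = \|e\| > \rho$ (by $(I_2)$), so by the intermediate value theorem there is $t_\gamma \in (0,1)$ with $\gamma(t_\gamma) \in \partial B_\rho(0)$. Hypothesis $(I_1)$ then gives $\max_{t\in [0,1]} I(\gamma(t)) \geq I(\gamma(t_\gamma)) \geq \alpha$, and taking the infimum over $\gamma \in \Gamma$ yields $c \geq \alpha > 0$. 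Also, $\Gamma$ is nonempty since the straight segment $\gamma_0(t) = te$ belongs to $\Gamma$, so $c \leq \max_{t \in [0,1]} I(te) < +\infty$.

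Next I would argue by contradiction: assume that $c$ is a regular value, i.e.\ $I'(u) \neq 0$ for every $u$ with $I(u) = c$. Combining this with the $(PS)_c$ condition, I claim that there exist $\bar\varepsilon > 0$ and $\delta > 0$ such that $\|I'(u)\|_{H^{-1}} \geq \delta$ whenever $|I(u) - c| \leq \bar\varepsilon$; otherwise one could extract a sequence $\{u_n\}$ with $I(u_n) \to c$ and $\|I'(u_n)\|_{H^{-1}} \to 0$, and $(PS)_c$ would force a convergent subsequence whose limit is a critical point at level $c$, contradicting the assumption. With this quantitative bound in hand, I invoke the pseudo-gradient lemma to obtain a locally Lipschitz vector field $V \colon \widetilde{H} \to H^1(\mathbb{R}^N)$ (where $\widetilde{H}$ is the set of regular points) satisfying $\|V(u)\| \leq 2 \|I'(u)\|_{H^{-1}}$ and $\langle I'(u), V(u)\rangle \geq \|I'(u)\|_{H^{-1}}^2$.

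The heart of the proof is then the construction of the deformation $\eta \colon [0,1] \times H^1(\mathbb{R}^N) \to H^1(\mathbb{R}^N)$. Fix $\varepsilon \in (0, \min\{\bar\varepsilon, c/2, -I(e)+c/2\})$ (using $(I_2)$) and let $\chi$ be a cutoff function that is $1$ on $I^{-1}([c-\varepsilon, c+\varepsilon])$ and $0$ outside $I^{-1}([c-2\varepsilon, c+2\varepsilon])$. Solving the ODE $\frac{d\eta}{ds} = -\chi(\eta)\, V(\eta)/\|V(\eta)\|$ with $\eta(0,u) = u$ and using the lower bound on $\|I'\|$, standard computation shows that for $s$ large enough (bounded uniformly in $u$), $\eta(s, \cdot)$ maps $I^{c+\varepsilon}$ into $I^{c-\varepsilon}$; after rescaling the time parameter to $[0,1]$ this yields the required $\eta$. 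Now take $\gamma \in \Gamma$ with $\max_{t} I(\gamma(t)) \leq c + \varepsilon$, which exists by definition of $c$, and set $\tilde\gamma(t) := \eta(1, \gamma(t))$. Since $I(0) = 0 < c - \varepsilon$ and $I(e) \leq 0 < c - \varepsilon$ by the choice of $\varepsilon$, the cutoff ensures $\eta(\cdot, 0) \equiv 0$ and $\eta(\cdot, e) \equiv e$, so $\tilde\gamma \in \Gamma$. But $\max_{t} I(\tilde\gamma(t)) \leq c - \varepsilon$ contradicts the definition of $c$, completing the proof.

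The main obstacle is the deformation construction: one needs the pseudo-gradient lemma because $I'$ is generally only continuous (not locally Lipschitz), and the $(PS)_c$ condition must be used precisely to guarantee the uniform lower bound on $\|I'\|$ in a neighborhood of level $c$, which in turn gives a uniform time for the flow to decrease the functional by $2\varepsilon$. Everything else is bookkeeping about cutoffs and endpoints.
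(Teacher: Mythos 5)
Your argument is essentially correct, but note that the paper itself offers no proof of this statement: Theorem \ref{mountainpass} is recalled verbatim as the classical Mountain Pass Theorem of Ambrosetti and Rabinowitz, with only a citation to \cite{AR}, so there is nothing in the paper to compare against. What you have written is the standard quantitative-deformation proof (as in \cite{AR} or Willem \cite{Wi}): the intermediate value theorem on $t\mapsto\|\gamma(t)\|$ to get $c\geq\alpha>0$, the negation of $(PS)_c$ plus regularity of the level $c$ to extract $\bar\varepsilon,\delta>0$ with $\|I'(u)\|_{H^{-1}}\geq\delta$ on a band around level $c$, a pseudo-gradient field to compensate for $I'$ being merely continuous, and the truncated normalized flow to push an almost-optimal path below $c-\varepsilon$ while freezing the endpoints, contradicting the definition of $c$. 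All the steps are sound. Two small points of care: you should take $\varepsilon$ with $2\varepsilon\leq\bar\varepsilon$ (not merely $\varepsilon<\bar\varepsilon$), since the cutoff $\chi$ is supported in $I^{-1}([c-2\varepsilon,c+2\varepsilon])$ and the lower bound $\|I'\|_{H^{-1}}\geq\delta$ must hold on all of that set for the exit-time estimate and for $V/\|V\|$ to be defined there; and your third constraint $\varepsilon<-I(e)+c/2$ is redundant, since $I(e)\leq 0$ already gives $I(e)<c-2\varepsilon$ once $\varepsilon<c/2$, which is what is needed to freeze the endpoint $e$.
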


We recall that a solution $v$ of (\ref{prob}) is said to be a least energy solution if, and only if
\begin{equation}\label{mdef}
I(v)=m,\;\; \text{where} \;\;m:= \inf \{I(u); u\in H^1(\mathbb{R}^N)\setminus \{0\}\; \text{is a solution of}\; (\ref{prob})\}.
\end{equation}

\begin{remark}
Since any solution of (\ref{prob}) satisfies Pohozaev identity, then $m_{\mathcal{P}}\leq m$. We will show in Lemma \ref{lem4} that in fact $m=m_{\mathcal{P}}$. 
\end{remark}

The important work of Jeanjean and Tanaka \cite{JT} was decisive to relate $m$ and $c$ in a theorem which states:\\
\begin{theorem} \label{theoremjeanjean}
Assume $(f_1)-(f_4)$ and $(g_1)$. Then $m=c$ holds, where $m,\; c>0$ are defined in (\ref{mdef}) and (\ref{defc}), respectively. That is, the mountain pass value gives the least energy level. Morever, for any least energy solution $v$ of (\ref{prob}), there exists a path $\gamma\in \Gamma$ such that $v\in \gamma([0,1])$ and
$$\max_{\tau\in[0,1]}I(\gamma(\tau))=I(v).$$
\end{theorem}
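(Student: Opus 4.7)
The plan is to sandwich $c$ and $m$ through the Pohozaev minimum level $m_{\mathcal{P}}$, establishing the four inequalities $c \le m$, $m_{\mathcal{P}} \le m$, $c \ge m_{\mathcal{P}}$ and $m_{\mathcal{P}} \ge m$; together these force $m = c = m_{\mathcal{P}}$. The first and third come from explicit constructions, the second is immediate, and the fourth is where the real analytical work lies.

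For $c \le m$ and the path statement, let $v$ be any least energy solution, so by Pohozaev $v \in \mathcal{P}$. Consider the scaling curve $\gamma_0(t)(x) := v(x/t)$ for $t > 0$ with $\gamma_0(0) := 0$. A change of variables gives
\begin{equation*}
I(\gamma_0(t)) = \frac{t^{N-2}}{2}\int_{\mathbb{R}^N}|\nabla v|^2\,dx - t^N \int_{\mathbb{R}^N} G(v)\,dx,
\end{equation*}
and substituting $\int G(v)\,dx = \frac{N-2}{2N}\int |\nabla v|^2\,dx$ (which is $J(v)=0$) shows that $t \mapsto I(\gamma_0(t))$ is strictly increasing on $(0,1)$, strictly decreasing on $(1,\infty)$, attains its maximum $I(v)$ at $t = 1$, and tends to $-\infty$ as $t \to \infty$. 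Continuity of $\gamma_0$ at $t = 0$ follows from $\|\gamma_0(t)\|^2 = t^{N-2}\int|\nabla v|^2\,dx + t^N\int v^2\,dx \to 0$ since $N \ge 3$. Picking $T$ so large that $I(\gamma_0(T)) \le 0$ and setting $\gamma(s) := \gamma_0(Ts)$ produces a path in $\Gamma$ with $\max_{s\in[0,1]} I(\gamma(s)) = I(v) = m$, which yields both $c \le m$ and the path assertion about $v$.

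For $c \ge m_{\mathcal{P}}$, I would show every $\gamma \in \Gamma$ must cross $\mathcal{P}$. Using $(f_2)$, $(f_4)$ and Sobolev embedding one checks that $J(u) > 0$ whenever $u \ne 0$ with $\|u\|$ sufficiently small, so $J(\gamma(s)) > 0$ for small $s > 0$; at the endpoint, rewriting $I(e) \le 0$ as $\tfrac12\int|\nabla e|^2\,dx \le \int G(e)\,dx$ and plugging into $J$ gives $J(e) \le (1 - N/(N-2))\int|\nabla e|^2\,dx < 0$. Continuity of $s \mapsto J(\gamma(s))$ and the intermediate value theorem produce $s^* \in (0,1)$ with $\gamma(s^*) \in \mathcal{P}$, so $\max_s I(\gamma(s)) \ge m_{\mathcal{P}}$ and hence $c \ge m_{\mathcal{P}}$. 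The companion inequality $m_{\mathcal{P}} \le m$ is automatic, since every solution of (\ref{prob}) lies on $\mathcal{P}$ by Pohozaev.

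The remaining inequality $m \le m_{\mathcal{P}}$ is the main obstacle, because in the asymptotically linear regime $(PS)_c$ may fail and Theorem~\ref{mountainpass} does not directly produce a critical point at level $c$. My plan is to extract a minimiser of $I$ on $\mathcal{P}$ and promote it to a weak solution of (\ref{prob}). Take a minimising sequence $\{u_n\} \subset \mathcal{P}$; replacing each $u_n$ by its Schwarz symmetrisation reduces to the radial setting without increasing $I$ and preserving $J = 0$. Boundedness in $H^1_{\mathrm{rad}}(\mathbb{R}^N)$ follows from $I(u_n) \to m_{\mathcal{P}}$ together with $J(u_n) = 0$, and the compact embedding $H^1_{\mathrm{rad}}(\mathbb{R}^N) \hookrightarrow L^p(\mathbb{R}^N)$ for $2 < p < 2^*$ lets me pass to a nontrivial strong limit in $L^p$ and weak limit $u^*$ in $H^1$; a scaling argument in $t$ shows that $u^* \in \mathcal{P}$ with $I(u^*) = m_{\mathcal{P}}$. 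Finally, writing the constrained critical point equation $I'(u^*) = \mu J'(u^*)$ and applying the Pohozaev identity to this perturbed equation forces $\mu = 0$, so $u^*$ is a genuine weak solution of (\ref{prob}). Hence $m \le I(u^*) = m_{\mathcal{P}}$, closing the chain.
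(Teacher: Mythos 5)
The paper does not actually prove this theorem: it is quoted from Jeanjean and Tanaka \cite{JT}, and the only ingredients developed in the text are Lemmas \ref{c1}, \ref{A1} and \ref{lem4}. Your sandwich $c\le m$, $m_{\mathcal P}\le m$, $c\ge m_{\mathcal P}$, $m\le m_{\mathcal P}$ is precisely the architecture of \cite{JT} for three of the four inequalities: the dilation path $t\mapsto v(\cdot/t)$ (which gives $c\le m$ and the ``moreover'' clause, since by $J(v)=0$ the map $t\mapsto I(v(\cdot/t))$ has its unique maximum at $t=1$ and tends to $-\infty$) and the intermediate-value crossing of $\mathcal P$ (which gives $c\ge m_{\mathcal P}$, using that $J>0$ on a small punctured ball and $J(e)<0$ when $I(e)\le 0$) are both correct and standard. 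The only cosmetic point there is that the paper's $\Gamma$ fixes the endpoint $e$, so you should either observe that $c$ does not depend on the choice of $e$ or work with the class $\{\gamma:\gamma(0)=0,\ I(\gamma(1))<0\}$ as in \cite{JT}.

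Where you genuinely diverge is the inequality $m\le m_{\mathcal P}$: Jeanjean--Tanaka do not minimise $I$ on $\mathcal P$ directly, but import the Berestycki--Lions construction (minimise $\int|\nabla u|^2$ under $\int G(u)=1$, then rescale), which delivers a least energy solution sitting on $\mathcal P$ at level $\inf_{\mathcal P}I$. Your direct minimisation can be made to work, but two assertions are not right as written. First, Schwarz symmetrisation does \emph{not} preserve $J=0$: equimeasurability preserves $\int G(u)$ while P\'olya--Szeg\H{o} only gives $\int|\nabla u^{\ast}|^2\le\int|\nabla u|^2$, hence $J(u^{\ast})\le 0$ with strict inequality possible. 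You must reproject via Lemma \ref{A1}; since $\int G(u^{\ast})>0$ and $J(u^{\ast})\le 0$, the projection parameter in (\ref{t_proj}) satisfies $t_0\le 1$, and by (\ref{coercive}) the energy of the reprojected function does not exceed $I(u_n)$, so the minimising property survives --- but this step has to be said. Second, ``boundedness in $H^1$ follows from $I(u_n)\to m_{\mathcal P}$ and $J(u_n)=0$'' only bounds $\int|\nabla u_n|^2=NI(u_n)$; the $L^2$ bound is not free. It does follow, since $J(u_n)=0$ and (\ref{Fcx}) give $\tfrac{\lambda-\varepsilon}{2}\|u_n\|_2^2\le C\|u_n\|_p^p$, and Gagliardo--Nirenberg bounds $\|u_n\|_p^p$ by $\|\nabla u_n\|_2^{N(p-2)/2}\|u_n\|_2^{\,p-N(p-2)/2}$ with $p-N(p-2)/2<2$ for $N\ge 3$ --- but again an argument is required rather than an assertion. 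With those two repairs, the remaining steps (weak lower semicontinuity plus strong $L^p$ convergence give $J(u^{\ast})\le 0$, a strict inequality would produce a point of $\mathcal P$ strictly below $m_{\mathcal P}$, and the Lagrange multiplier is killed exactly as in Lemma \ref{lem4}) close the chain.
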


Note that, from the assumptions $(f_1)$, $(f_2)$ and $(f_4)$, given $\varepsilon >0$, there exists $C_1=C_1(\varepsilon)>0$, such that
\begin{equation}\label{Fcx} 
|F(u)|\leq \frac{\varepsilon}{2}u^2+C_1|u|^p, \;\;\;\; 2<p<2^*.
\end{equation}

The following lemmas describe the analytical tools which are going to support the construction of the new algorithm.\\

\begin{lemma}\label{c1}
Let the functional $J:H^1(\Rn)\rightarrow \mathbb{R}$ be defined as in $(\ref{j12})$.
Then\\
$(1) $ there exists $\sigma>0$ such that $\left\|u\right\|>\sigma$, for all $u \in \mathcal{P}$;\\
$(2)\; \mathcal{P}=\{u\in H^1(\Rn) \setminus \{0\}|\;J(u)=0\}$ is closed;\\
$(3)\; \mathcal{P}$ is a manifold of class $C^1$.\\
\end{lemma}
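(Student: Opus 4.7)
For (1), the plan is to use $J(u)=0$ to replace the $\|\nabla u\|_2^2$ term by an integral of $G$, and then invoke the growth bound $(\ref{Fcx})$. Unpacking $G$ via $(\ref{Gdef})$, the constraint $J(u)=0$ becomes
\[
\|\nabla u\|_2^2 + \frac{N\lambda}{N-2}\|u\|_2^2 \;=\; \frac{2N}{N-2}\int_{\Rn} F(u)\,dx,
\]
so the left-hand side dominates $c_0\|u\|^2$ for a fixed $c_0>0$. Inserting $(\ref{Fcx})$ together with the Sobolev embedding $H^1(\Rn)\hookrightarrow L^p(\Rn)$ bounds the right-hand side by $\varepsilon C_1\|u\|^2 + C_2\|u\|^p$. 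Choosing $\varepsilon$ small enough to absorb the quadratic piece on the left leaves $\|u\|^2\lesssim \|u\|^p$ with $p>2$, which, since $u\neq 0$, forces $\|u\|\geq\sigma$ for a uniform $\sigma>0$.

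Given (1), part (2) is immediate. Under $(f_4)$, standard Nemytskii-type estimates imply that $u\mapsto \int_{\Rn} G(u)\,dx$ is continuous on $H^1(\Rn)$, so $J$ is continuous and $J^{-1}(0)$ is closed. Because (1) prevents $0$ from being a limit point of $\mathcal{P}$, one concludes that $\mathcal{P}=J^{-1}(0)\setminus\{0\}$ is itself closed.

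For (3), the natural route is the regular-value theorem. Since $J\in C^1(H^1(\Rn),\mathbb{R})$ (a further standard consequence of $(f_4)$) with derivative
\[
J'(u)\varphi \;=\; 2\int_{\Rn}\nabla u\cdot\nabla\varphi\,dx \;-\; \frac{2N}{N-2}\int_{\Rn} g(u)\varphi\,dx, \qquad g(u):=G'(u)=-\lambda u+f(u),
\]
it suffices to verify that $J'(u)\neq 0$ for every $u\in\mathcal{P}$. The cleanest route is an argument by contradiction. If $J'(u)=0$, then $u$ weakly (and, by elliptic bootstrap under $(f_4)$, classically) satisfies $-\Delta u = \frac{N}{N-2}g(u)$, so the Pohozaev identity $(\ref{pohozaevid})$ for this rescaled equation yields $(N-2)\|\nabla u\|_2^2 = \frac{2N^2}{N-2}\int_{\Rn} G(u)\,dx$. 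Combined with $J(u)=0$, rewritten as $(N-2)\|\nabla u\|_2^2 = 2N\int_{\Rn} G(u)\,dx$, this forces $\int_{\Rn} G(u)\,dx = 0$, hence $\|\nabla u\|_2=0$ and so $u\equiv 0$, contradicting $u\in\mathcal{P}$.

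The main technical obstacle lies in (3), specifically in establishing the non-degeneracy of $J'$ on $\mathcal{P}$. A tempting alternative is to test $J'(u)$ against the scaling variation $\left.\frac{d}{dt}u(\cdot/t)\right|_{t=1}$, and indeed a direct computation of $t\mapsto J(u(\cdot/t))$ shows the scaled energy has nonzero derivative at $t=1$; but transferring this to a statement about $J'(u)$ requires differentiability of the curve $t\mapsto u(\cdot/t)$ in the $H^1$-topology, which is not automatic for arbitrary $u\in H^1(\Rn)$. The Pohozaev contradiction above sidesteps this difficulty by using only the $C^1$ structure of $J$ together with the regularity already guaranteed by the subcritical hypothesis $(f_4)$.
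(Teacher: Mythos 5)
Your proof is correct. Parts (1) and (2) follow the paper's argument essentially verbatim: rewrite $J(u)=0$ as $\|\nabla u\|_2^2+\frac{N\lambda}{N-2}\|u\|_2^2=2^*\int_{\Rn}F(u)\,dx$, absorb the quadratic part of (\ref{Fcx}) into the left side, and use the Sobolev embedding to get $\|u\|^2\lesssim\|u\|^p$ with $p>2$; then closedness of $\mathcal{P}=J^{-1}(0)\setminus\{0\}$ follows from continuity of $J$ plus the fact that (1) isolates $0$ in the level set. Where you genuinely diverge is part (3). The paper simply tests $J'(u)$ against $u$ itself: using $J(u)=0$ one computes $\tfrac12 J'(u)u=2^*\int_{\Rn}\bigl(F(u)-\tfrac12 f(u)u\bigr)dx<0$ by hypothesis (\ref{NQ}), so $J'(u)\neq 0$ in one line, at the cost of invoking that extra sign condition. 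Your contradiction argument instead interprets $J'(u)=0$ as the Euler equation $-\Delta u=\frac{N}{N-2}g(u)$ and plays its Pohozaev identity against $J(u)=0$ to force $\int_{\Rn}G(u)\,dx=0$ and hence $u\equiv 0$; this is exactly the mechanism the paper uses later, in Lemma \ref{lem4}, to kill the Lagrange multiplier, so it is entirely consistent with the paper's toolbox. Your route avoids (\ref{NQ}) altogether but pays for it with the regularity/bootstrap needed to legitimize the Pohozaev identity for the auxiliary equation (which does hold under $(f_1)$--$(f_4)$, since the nonlinearity is subcritical and $G(u)\in L^1$ for $u\in H^1(\Rn)$); the paper's route is more elementary and purely variational but requires the additional structural assumption. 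Your closing remark about why one should not differentiate $t\mapsto u(\cdot/t)$ in $H^1$ for arbitrary $u$ is a fair caution and does not affect the argument.
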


\begin{proof}
	Verification of (1). Since $u\in \mathcal{P}$, we have:
	\begin{equation*}
	\int_{\mathbb{R}^N}|\nabla u|^2\,dx=2^*\int_{\mathbb{R}^N} G(u)\,dx,
	\end{equation*}
	thus,
	\begin{equation*}
	\int_{\mathbb{R}^N}\left(|\nabla u|^2 + \frac{\lambda N}{N-2}u^2\right) dx=2^* \int_{\mathbb{R}^N} F(u)\,dx.
	\end{equation*}
	Hence, there is a constant $M$, given by $\displaystyle M:=\min\left\{1,\frac{\lambda N}{N-2}\right\}$, such that
	\begin{equation*}
	M\left\|u\right\|^2\leq 2^*\int_{\mathbb{R}^N}  F(u)\,dx,
	\end{equation*}
	and, using $(\ref{Fcx})$, it follows that
	\begin{equation*}
	M\left\|u\right\|^2\leq 2^*\int_{\mathbb{R}^N}\left(\frac{\varepsilon}{2}|u|^2+C_1|u|^p\right) dx.
	\end{equation*}
	Now, taking $\varepsilon>0$ such that 
	$2^*\varepsilon<M$,
	we obtain
	\begin{equation*}
	\frac{M}{2}\left\|u\right\|^2\leq2^* C_1\int_{\mathbb{R}^N}|u|^p \,dx, \;\;2<p<2^*.
	\end{equation*}
	Therefore, by the continuous Sobolev embedding $H^1(\mathbb{R}^N) \hookrightarrow L^p(\mathbb{R}^N)$,  there is $\sigma>0$ such that
	$\label{limita}
	\sigma\leq\left\|u\right\|^{p-2}.
	$
	\\
Verification  of (2) and (3). By the definition of  $J$, we have
\begin{equation*}
J(u)=\int_{\mathbb{R}^N}|\nabla u|^2\,dx-2^*\int_{\mathbb{R}^N} G(u) \,dx,
\end{equation*}
which is a functional of class $C^1$. Thus
\begin{equation*}
\mathcal{P}=\{J(u)=0, u\not = 0\}= J^{-1}(\{0\}) \setminus \{0\},
\end{equation*}
and it follows that $\mathcal{P}$ is a  closed set, since by (1), $0 \in H^1({\mathbb{R^N}})$ is an isolated point of the level set $J=0$. Moreover, using $(\ref{NQ})$, we have
\begin{eqnarray*}
\frac{1}{2}J'(u)u&=&2^*\int_{\mathbb{R}^N}\left(G(u)-\frac{1}{2}g(u)u\right) dx\\
&=&2^*\int_{\mathbb{R}^N}\left(F(u) -\frac{1}{2}f(u)u\right) dx<0.
\end{eqnarray*}
Therefore, $J'(u)\neq0$ and, thus, $\mathcal{P}$ is a manifold of class $C^1$ in $H^1(\Rn)$.\\
 \hfill \end{proof}

\begin{lemma}\label{A1}
For each $u\in {H^1(\Rn)}\backslash\{0\}$ with $\displaystyle \int_{\mathbb{R}^N} G(u)>0$ there exists a unique real number $t_0>0$ such that $u(\frac{\cdot}{t_0})\in \mathcal{P}$ and $I(u(\frac{\cdot}{t_0}))$ is the maximum of the function $t\mapsto I(u(\frac{\cdot}{t})), \; t>0.$
\end{lemma}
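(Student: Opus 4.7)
The plan is to introduce the one‑parameter family $v_t(x) := u(x/t)$ for $t>0$, reduce the statement to elementary calculus on the single function $h(t) := I(v_t)$, and show that the unique maximizer of $h$ coincides with the unique scaling that puts $u(\cdot/t)$ on $\mathcal{P}$.

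First, I would record how the basic ingredients scale. Changing variables $y = x/t$ gives $\int_{\R^N}|\nabla v_t|^2\,dx = t^{N-2}\int_{\R^N}|\nabla u|^2\,dy$, $\int_{\R^N}v_t^2\,dx = t^N\int_{\R^N}u^2\,dy$, $\int_{\R^N}F(v_t)\,dx = t^N\int_{\R^N}F(u)\,dy$ and, in particular, $\int_{\R^N}G(v_t)\,dx = t^N\int_{\R^N}G(u)\,dy$, where $G$ is as in \eqref{Gdef}. Writing $A := \int_{\R^N}|\nabla u|^2\,dx$ and $B := \int_{\R^N} G(u)\,dx$, this yields
\begin{equation*}
J(v_t) = t^{N-2} A - 2^{*} t^{N} B
\qquad\text{and}\qquad
I(v_t) = \tfrac{1}{2}t^{N-2} A - t^{N} B.
\end{equation*}
Note $B>0$ by hypothesis, and $A>0$ because $u\in H^1(\R^N)\setminus\{0\}$ forces $\nabla u\not\equiv 0$ (otherwise $u$ would be constant, hence $0$ in $H^1(\R^N)$).

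Next, define $h(t) := I(v_t)$ for $t>0$. A direct differentiation gives
\begin{equation*}
h'(t) = \tfrac{N-2}{2}\,t^{N-3} A - N t^{N-1} B = \tfrac{N-2}{2t}\bigl(t^{N-2} A - 2^{*}t^{N} B\bigr) = \tfrac{N-2}{2t}\,J(v_t),
\end{equation*}
so the critical points of $h$ on $(0,\infty)$ are exactly the values of $t$ for which $v_t\in\mathcal{P}$. Solving $h'(t)=0$ with $A,B>0$ and $N\ge 3$ gives the unique positive root
\begin{equation*}
t_0 \;=\; \left(\frac{A}{2^{*}B}\right)^{1/2},
\end{equation*}
which proves uniqueness and existence of the $t_0$ with $u(\cdot/t_0)\in\mathcal{P}$.

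Finally, I would identify $t_0$ as the global maximizer. From the explicit formula for $h'$, $h'(t)>0$ for $0<t<t_0$ and $h'(t)<0$ for $t>t_0$, so $h$ is strictly increasing on $(0,t_0)$ and strictly decreasing on $(t_0,\infty)$; combined with $h(0^+)=0$ and $h(t)\to -\infty$ as $t\to\infty$ (since $N\ge 3$ makes the $-t^N B$ term dominant), this shows $t_0$ is the unique maximizer of $t\mapsto I(u(\cdot/t))$ on $(0,\infty)$. There is no real obstacle here: every step is a routine scaling computation, and the only delicate point worth stating explicitly is that $A>0$ (so that $t_0$ is well defined and the critical point is nondegenerate), which is ensured by $u\in H^1(\R^N)\setminus\{0\}$.
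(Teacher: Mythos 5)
Your proof is correct and follows essentially the same route as the paper: a scaling computation reducing everything to the one-variable function $h(t)=I(u(\cdot/t))$, whose unique positive critical point $t_0^2=(N-2)A/(2NB)$ is exactly the projection onto $\mathcal{P}$. In fact you go slightly further than the paper's own proof by making explicit the sign change of $h'$ across $t_0$ and the identity $h'(t)=\tfrac{N-2}{2t}J(u(\cdot/t))$, which the paper leaves implicit when asserting that $t_0$ is the maximizer.
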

\begin{proof}
Consider the following function $h$, given by
\begin{equation*}
h(t):=I\left(u\left(\frac{.}{t}\right)\right)=\frac{t^{N-2}}{2}\int_{\mathbb{R}^N}|\nabla u|^2+\frac{\lambda t^N}{2}\int_{\mathbb{R}^N}u^2-t^N \int_{\mathbb{R}^N}F(u).
\end{equation*}
Thus, for $N\geq 3$:
\begin{equation*}
h'(t)=\frac{(N-2)t^{N-3}}{2}\int_{\mathbb{R}^N}|\nabla u|^2+Nt^{N-1}\int_{\mathbb{R}^N}\left[\frac{\lambda}{2}u^2- F(u)\right],
\end{equation*}
and $h'(t)=0$ if, and only if,
\begin{equation*}
t^{N-3}\left(\frac{(N-2)}{2}\int_{\mathbb{R}^N}|\nabla u|^2+Nt^2\int_{\mathbb{R}^N}\left[\frac{\lambda }{2} u^2- F(u)\right]\right)=0.
\end{equation*}
Therefore we have either $t=0$ or
\begin{equation}\label{t_proj}
t^2=\frac{(N-2)\displaystyle\int_{\mathbb{R}^N}|\nabla u|^2}{2N\displaystyle\int_{\mathbb{R}^N}\left[-\frac{\lambda }{2}u^2+F(u)\right]}=\displaystyle\frac{(N-2)\displaystyle\int_{\mathbb{R}^N}|\nabla u|^2}{2N\displaystyle\int_{\mathbb{R}^N}G(u)}\cdot
\end{equation}
\hfill{}
\end{proof}

\begin{remark}
The statement of Lemma \ref{A1} is also true for $N=2$ and can be found in Jeanjean and Tanaka \cite{JT}. 
\end{remark}
Now, we can prove the claim in Remark \ref{obs01}. In fact, first note that, for $G(u)$ as in (\ref{Gdef}), $G(0)=0$ and consider the ball $B_1(0)$ and $A_{\varepsilon}=B_{1+\varepsilon}(0)\setminus B_1(0)$, for a given $0<\varepsilon<1$.  Moreover, we define
\begin{equation*} 
u_\xi(x)=
\left\{\begin{array}{rl}
\xi& \text{if} \;\; x\in \;B_1(0)\;,\\
0& \text{if} \;\; x\in \; B^c_{1+\varepsilon}(0)\;,
\end{array}\right.
\end{equation*}
so that $u_\xi(x)=u_\xi(|x|)$ is a continuous, non-increasing function of $r=|x|$. Since $0<u_\xi(x)<\xi$, then $|G(u_\xi(x))|<C$. In addition,
\begin{equation}\label{EqGU}
\int_{\mathbb{R}^N}G(u_\xi)dx=\int_{B_{1+\varepsilon}(0)}G(u_\xi)dx=\int_{B_1(0)}G(u_\xi)dx + \int_{A_{\varepsilon}}G(u_\xi)dx,
\end{equation}
\begin{equation}\label{Eqg1}
\int_{B_1(0)}G(u_\xi)dx= G(\xi) \, \text{meas}(B_1(0))>0 
\end{equation}
and
\begin{equation}\label{Eqg2}
\Big|\int_{A_{\varepsilon}}G(u_\xi)dx\Big|\leq \int_{A_{\varepsilon}}|G(u_\xi)|dx \leq C \, \text{meas} \, (A_{\varepsilon})=C\varepsilon,
\end{equation}
where $\text{meas}(A)$ denotes the Lebesgue measure of the set $A$.

Therefore, taking $\varepsilon$ sufficiently small and applying $(\ref{Eqg1})$, $(\ref{Eqg2})$ in $(\ref{EqGU})$, we obtain that
\begin{equation}\label{EqGU1}
\int_{\mathbb{R}^N}G(u_\xi)dx=\int_{B_2(0)}G(u_\xi)dx\geq G(\xi)\, \text{meas} \,(B_1(0))-C\varepsilon> 0.
\end{equation}
By Lemma \ref{A1}, there exists $t_{\xi}$  such that $u_{\xi}(\frac{\cdot}{t_{\xi}})\in \mathcal{P}.$ 
The next lemma shows that $\mathcal{P}$ is a natural constraint for the functional $I$.

\begin{lemma}\label{lem4}
A function $u\in H^1(\Rn)\setminus \{0\}$ is a critical point of $I$ if and only if $u$ is a critical point of $I$ restricted to $\mathcal{P}$.
\end{lemma}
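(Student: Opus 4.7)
The plan is to prove the two implications separately, invoking the method of Lagrange multipliers for the nontrivial direction.

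For the forward implication $(\Rightarrow)$, if $I'(u)=0$ then $u$ is a weak solution of problem (\ref{prob}), so by the Pohozaev identity (\ref{pohozaevid}) recalled in the introduction we have $J(u)=0$, i.e.\ $u\in\mathcal{P}$. Since $I'(u)$ annihilates all of $H^1(\Rn)$, it in particular annihilates the tangent space $T_u\mathcal{P}$, and therefore $u$ is critical for $I|_{\mathcal{P}}$.

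For the converse $(\Leftarrow)$, Lemma \ref{c1} tells us that $\mathcal{P}$ is a $C^1$ manifold with $J'(u)\neq 0$ at every $u\in\mathcal{P}$ (this non-vanishing was essentially verified inside the proof of (3) there), so the Lagrange multiplier rule produces some $\mu\in\mathbb{R}$ with $I'(u)=\mu J'(u)$. Writing this identity out in weak form, $u$ solves
\begin{equation*}
-(1-2\mu)\Delta u \;=\; \left(1-\frac{2N\mu}{N-2}\right) g(u) \qquad \text{in}\ H^{-1}(\Rn),
\end{equation*}
and the remaining task is to show that $\mu=0$. The natural tool is the classical Pohozaev identity applied to this \emph{modified} equation; the growth bound $(f_4)$ together with standard elliptic bootstrap give $u$ enough regularity and decay for the test with $x\cdot\nabla u$ to be rigorous. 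This yields
\begin{equation*}
(N-2)(1-2\mu)\int_{\Rn}|\nabla u|^2\,dx \;=\; 2N\left(1-\frac{2N\mu}{N-2}\right)\int_{\Rn} G(u)\,dx.
\end{equation*}
Subtracting $(1-2\mu)$ times the constraint $(N-2)\int|\nabla u|^2 = 2N\int G(u)$ collapses this, after a short simplification, to the single relation $\mu\int_{\Rn}G(u)\,dx = 0$.

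The only genuine obstacle is then ruling out $\int_{\Rn}G(u)\,dx=0$. This is immediate from the constraint itself: were it zero, the relation $J(u)=0$ would force $\int|\nabla u|^2=0$ as well, hence $u\equiv 0$ in $H^1(\Rn)$, contradicting $u\in\mathcal{P}\subset H^1(\Rn)\setminus\{0\}$. Therefore $\mu=0$, so $I'(u)=0$ as an element of $H^{-1}(\Rn)$, and the equivalence is established.
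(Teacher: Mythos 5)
Your argument is correct and follows essentially the same route as the paper: the Lagrange multiplier rule on the $C^1$ manifold $\mathcal{P}$ (Lemma~\ref{c1}), the modified Euler--Lagrange equation, the Pohozaev identity applied to that modified equation, and the constraint $J(u)=0$ to force the multiplier to vanish. The only cosmetic difference is that you reduce to $\mu\int_{\Rn}G(u)\,dx=0$ while the paper reduces to $\eta(N-2)\int_{\Rn}|\nabla u|^2\,dx=0$; these are equivalent via $J(u)=0$.
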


\begin{proof}
The proof follows \cite{JS}. Let $u$ be a critical point of the functional $I$, restricted to $\mathcal{P}$.  By the Lagrange Multiplier Theorem for Banach spaces (see Theorem 26.1, \cite{Deim}), we have
$$I'(u) + \eta J'(u) =0,\;\; \text{for \;some} \;\;\eta\in \mathbb{R}.$$
Let us show that $\eta= 0$. Applying $u$ in the above equation, we obtain:
\begin{equation}\label{arueira}
I'(u)u + \eta J'(u)u = 0,	
\end{equation}
and since $N \geq 3$ it is equivalent to
\begin{eqnarray*}
0 &=&\Intrn|\nabla u|^2 + \lambda u^2dx - \Intrn f(u)u \;dx \\
&&+ \eta\left((N-2)\Intrn|\nabla u|^2 - N\Intrn f(u)u-\lambda u^2 \;dx\right).
\end{eqnarray*}
This yields the following equation
\begin{equation*}
-\Delta u + \lambda u - f(u) + \eta\left(-(N-2)\Delta u + \lambda Nu - Nf(u)\right) =0,
\end{equation*}
which, in turn, can be rewritten as
\begin{equation}
-(1+\eta(N-2))\Delta u + \lambda(1+\eta N)u = (1+\eta N)f(u). \label{restricao}
\end{equation}\\
This equation has the Pohozaev manifold associated with it, given by ${\mathcal{H}}^{-1}(\left\lbrace 0\right\rbrace)$, where
\begin{equation*}
\mathcal{H}(u) := \Frac{(1+\eta(N-2))(N-2)}{2}\Intrn |\nabla u|^2 \,dx - N\Intrn \overline{G(u)}\, dx,
\end{equation*}
with
\begin{equation*}
\overline{G(u)}:= (1+\eta N)F(u) - \lambda\Frac{(1+\eta N)}{2}u^2.
\end{equation*}
Thus, $\mathcal{H}$ can be rewritten as
\begin{eqnarray}
\nonumber \mathcal{H}(u)&=& \Frac{(1+\eta(N-2))(N-2)}{2}\Intrn |\nabla u|^2dx \\
\nonumber && -  N\Intrn \left((1+\eta N)F(u)-\lambda\Frac{(1+\eta N)}{2}u^2\right)dx\\
\nonumber & = & \Frac{(1+\eta(N-2))(N-2)}{2}\Intrn |\nabla u|^2 \;dx \\
\label{lagrangem} && -  N(1+\eta N)\Intrn\left(  F(u) - \lambda\Frac{u^2}{2}\right)dx.
\end{eqnarray}
However, since $u\in \mathcal{P}$, then $J(u)=0$, and thus, by (\ref{lagrangem}), 
\begin{eqnarray*}
\mathcal{H}(u) =-\eta(N-2)\Intrn |\nabla u|^2 \;dx \;.
\end{eqnarray*}
Even further, $u$ is a solution of the equation (\ref{restricao}) and therefore satisfies $\mathcal{H}(u) = 0$. Thus, we obtain
\begin{equation*}
-\eta(N-2)\Intrn |\nabla u|^2 \;dx = 0\;.
\end{equation*}

Since $N\geq3$ and $\Intrn |\nabla u|^2dx>0$, we have $\eta =0$. Thus equation (\ref{arueira}) is actually $I'(u)=0$ and $u$ is a critical point of $I$.
\hfill{}
\end{proof}
\begin{lemma}\label{lemcoercive}
Let $v$ and $w$ in $H^1(\Rn)$, 
$\Phi(\alpha):=I(w+\alpha v)$,
 $\gamma(\alpha):= 
(w + \alpha v)(\frac{\cdot}{t(\alpha)} ) \in \mathcal{P}$
 and $\Psi(\alpha):= I(\gamma(\alpha))$.
If $t(\alpha)$ is bounded from below by a positive constant,
then $\lim\limits_{\alpha \to \pm \infty} I(\gamma(\alpha))=
+\infty$
and $\min I(\gamma(\alpha))= I(\gamma(\hat{\alpha}))$ is attained for some $\hat{\alpha} \in \R$.
Otherwise, if $\lim_{\alpha_j \to + \infty} t(\alpha_j)=0$ on a subsequence $(\alpha_j)_{j \in \mathbb{N}}$
and there exists $\delta > 0$ such that 
$\Phi^\prime (\alpha) <0$, for $0 < \alpha < \delta$, 
then either there is $ \hat{\alpha} >0$ which is a point of local minimum of $I(\gamma(\alpha))$ or
$I(w + \alpha v) \leq I((w + \alpha v)(\frac{\cdot}{t(\alpha)})) <I(w)$ for $\alpha >0$.
\end{lemma}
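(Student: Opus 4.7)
The plan is to derive first a compact working formula for $\Psi$ by exploiting the Pohozaev constraint. Substituting $t(\alpha)^{N-2}\int|\nabla(w+\alpha v)|^2 = \tfrac{2N}{N-2}\, t(\alpha)^N \int G(w+\alpha v)$ into the expression of $I((w+\alpha v)(\cdot/t(\alpha)))$ should collapse the two terms of $I$ into
$$
\Psi(\alpha) = \frac{t(\alpha)^{N-2}}{N}\int_{\mathbb{R}^N}|\nabla(w+\alpha v)|^2\, dx.
$$
With this identity at hand, the remainder of the lemma becomes a study of how the two factors $t(\alpha)^{N-2}$ and $\int|\nabla(w+\alpha v)|^2$ interact.

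For Case 1, I would observe that the Dirichlet integral is a quadratic in $\alpha$ with positive leading coefficient $\int|\nabla v|^2>0$ (for $v\not\equiv 0$), hence blows up as $|\alpha|\to\infty$; combined with the hypothesis $t(\alpha)\geq c>0$ this gives $\Psi(\alpha)\to+\infty$. To conclude that the minimum is attained I will need continuity of $\alpha\mapsto t(\alpha)$, which I would obtain by the implicit function theorem applied to $J((w+\alpha v)(\cdot/t))=0$; the nondegeneracy $\partial_t J\neq 0$ at the projection follows from the critical-point computation carried out in Lemma~\ref{A1} together with the strict positivity of $\tfrac{1}{2}f(u)u-F(u)$ from (\ref{NQ}). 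Coercive plus continuous on $\mathbb{R}$ then yields the attained minimum.

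For Case 2 the coercivity argument fails, and I would exploit instead the hypothesis $\Phi'(\alpha)<0$ on $(0,\delta)$. This gives immediately $\Phi(\alpha)<I(w)$ on the initial interval, and Lemma~\ref{A1} supplies $\Phi(\alpha)\leq\Psi(\alpha)$ whenever the Pohozaev projection is defined. The dichotomy would then be handled by a sign-of-derivative argument: either $\Psi'$ vanishes at some $\hat\alpha>0$, producing a local minimum (first alternative), or $\Psi'$ keeps a definite sign on $(0,+\infty)$, in which case the explicit formula combined with $t(\alpha_j)\to 0$ along the subsequence forces monotone descent of $\Psi$ to values strictly below $I(w)$ (second alternative).

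The main obstacle is the second alternative of Case 2. Since $\Psi(0)=I(w(\cdot/t(0)))\geq I(w)$ by Lemma~\ref{A1}, $\Psi$ starts above $I(w)$ and must cross this value from above and remain strictly below without ever turning upward. To tie the sign of $\Psi'$ to the hypothesis on $\Phi'$ I would use the implicit differentiation
$$
\Psi'(\alpha) = t(\alpha)^{N-2}\!\!\int_{\mathbb{R}^N}\!\nabla(w+\alpha v)\cdot\nabla v\,dx - t(\alpha)^N\!\!\int_{\mathbb{R}^N}\!g(w+\alpha v)v\,dx,
$$
in which the contributions involving $t'(\alpha)$ cancel by virtue of the stationarity $h'(t(\alpha))=0$ from Lemma~\ref{A1}. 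The delicate step is to use the smallness of $t(\alpha)$ forced by the degenerate regime, together with $\Phi'(\alpha)=\int\nabla(w+\alpha v)\cdot\nabla v\,dx-\int g(w+\alpha v)v\,dx$, to transfer the sign information from $\Phi'$ to $\Psi'$ and thereby close the dichotomy.
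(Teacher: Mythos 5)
Your proposal follows essentially the same route as the paper: the identity $\Psi(\alpha)=\tfrac{1}{N}\,t(\alpha)^{N-2}\|\nabla(w+\alpha v)\|_2^2$ (the paper states it as $\|\nabla u\|_2^2=N I(u)$ for $u\in\mathcal{P}$, equation (\ref{coercive})), coercivity plus continuity in the first case, and the sign-change dichotomy for $\Psi'$ in the second. The one step you flag as delicate --- transferring the sign of $\Phi'$ to $\Psi'$ --- is only needed for small $\alpha>0$, not globally: since $w\in\mathcal{P}$ is implicit in the lemma (the conclusion uses $\Psi(0)=I(w)$), the explicit formula (\ref{t_proj}) gives $t(\alpha)\to 1$ as $\alpha\to 0^{+}$, so your expression for $\Psi'(\alpha)$ tends to $\Phi'(\alpha)$ and inherits its negativity near $0$; the paper does the same via $\|(w+\alpha v)-\gamma(\alpha)\|\to 0$. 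Once $\Psi'<0$ initially, the dichotomy closes exactly as you describe: either $\Psi'$ changes sign, yielding a local minimum at some $\hat\alpha>0$, or $\Psi'<0$ throughout, whence $\Psi(\alpha)<\Psi(0)=I(w)$, which combined with $\Phi(\alpha)\le\Psi(\alpha)$ from Lemma~\ref{A1} gives the second alternative.
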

\begin{proof}
It holds that
\begin{eqnarray*}
J(u)&=&\displaystyle\int_{\mathbb{R}^N}|\nabla u|^2 \,dx- \frac{2N}{N-2}\int_{\mathbb{R}^N} G(u)\,dx\\
&=&
2^* \left(I(u)- \frac{1}{N} \left\|\nabla u\right\|_2^2 \right).
\end{eqnarray*}
If $u \in \mathcal{P}$, then
\begin{equation}
\left\|\nabla u\right\|_2^2=NI(u).
\label{coercive}
\end{equation}
Putting  $u = \gamma(\alpha)$, we have two possibilities, either $t_\alpha \geq \bar t >0$, for some positive constant $\bar t$, hence $\left\|\nabla (w + \alpha v)(\frac{\cdot}{t_\alpha}) \right\|_2^2
=t_\alpha^{N-2}||\nabla (w+\alpha v)||_2^2
  \rightarrow + \infty$ as $\alpha \rightarrow \pm \infty$, and so, by (\ref{coercive}) it follows that $\lim\limits_{\alpha \to \pm \infty} I(\gamma(\alpha))=+\infty$. The minimum is attained because $I$ and $\gamma(\alpha)$ are continuous.
  
Otherwise, up to a subsequence, $t_\alpha \to 0$ as $\alpha \rightarrow + \infty$ or $\alpha \rightarrow - \infty$. By assumption the first case holds and also 
$\Phi^\prime (\alpha) <0$, so that
$I(w+\alpha v)< I(w)$, for $0 < \alpha < \delta$. 
Since $I \in C^1(H^1(\Rn), \mathbb{R})$, $\gamma \in C^1 (\mathbb{R})$ and
$\Phi^\prime(\alpha)=I'(w+\alpha v)v <0$, then $\Psi^\prime(\alpha)=I'(\gamma (\alpha))\gamma^\prime(\alpha)< 0$, for $\alpha$ positive and sufficiently small, because $||(w+ \alpha v) - \gamma (\alpha)|| \to 0$ as $\alpha \to 0$.
If $\Psi^\prime $ changes sign, then there is $\hat \alpha$ such that
$I(\gamma(\hat \alpha))$ is a local minimum. However, if
$\Psi^\prime (\alpha)$ does not change sign for $\alpha >0$, then
by Lemma \ref{A1}, $\Psi^\prime (\alpha)<0$ and $\Psi(0)= I(w)$ we obtain
\begin{equation}\label{secondcase}
I(w+\alpha v) \leq I(w + \alpha v(\frac{\cdot}{t_\alpha})) < I(w).
\end{equation}
\hfill{}
\end{proof}
\begin{remark}
	Note that in case $t_\alpha \to 0$ as $\alpha \rightarrow - \infty$, one may repeat the previous proof exchanging $v$ for $-v$ and $\alpha$ for $-\alpha$.
	\end{remark}			
%%%%%%%%%%%%%%%%%%%%%%%%%%%%%%%%%%%%%%%%%%%%%%%%%%%%%%%
%%%%%%%%%%%%%%%%%%%%%%%%%%%%%%%%%%%%%%%%%%%%%%%%%
\section{The steepest descent direction}
\label{sec_def_stepest}

The steepest descent direction  at $ w_1 \in H^1(\mathbb{R}^N)$   corresponds to finding $\hat v \in H^1(\mathbb{R}^N)$ with $\|\hat v\|=1$ such that 

\begin{equation}\label{steepestdef}
	\frac{I(w_1 + \varepsilon \hat v) - I (w_1)}{\varepsilon}
\end{equation}
is as negative as possible as $\varepsilon \to 0$. This is equivalent to finding the minimum of the Fr\'{e}chet  derivative at $w_1$ on $\hat v$,
i.e. $I'(w_1)\hat v$, subject to the constraint $\|\hat v\| =1$.

The device for solving numerically the steepest descent direction can be found by means of a linear equation detailed by J. Hor\'{a}k in \cite{JH}. For the sake of completeness we recall it here. Introducing the Lagrange Multiplier $\mu$, we therefore look for the unconstrained minimum of the functional $L: H^1(\mathbb{R}^N) \to \mathbb{R}$, defined by 
$$L(\hat v):= I'(w_1)\hat v + \mu \int_{\mathbb{R}^N} \nabla \hat v \cdot \nabla \hat v + \hat v \cdot  \hat v \;dx,$$
or, equivalently,
\[
L(\hat v):= \int_{\mathbb{R}^N} \nabla w_1 \cdot \nabla \hat v + \lambda w_1 \hat v - f(w_1)\hat v + \mu (|\nabla \hat v|^2 + | \hat v|^2 )\; dx.
\]\\
The Fr\'{e}chet derivative of $L$ exists and is given by
\[
L'(\hat v)\phi= \int_{\mathbb{R}^N} \nabla w_1 \cdot \nabla \phi + \lambda w_1 \phi - f(w_1)\phi + 2 \mu (\nabla \hat v \cdot \nabla \phi + \hat v \phi)\; dx,
\]
for any  $\phi \in H^1(\mathbb{R}^N)$. Hence, $L'(\hat v)= 0$ corresponds to a weak solution $\hat v \in H^1(\mathbb{R}^N)$ of the linear equation:

\begin{equation}\label{steepest}
2 \mu (\Delta \hat v - \hat v)=-\Delta w_1 + \lambda w_1 - f(w_1)\;.	
\end{equation}

%%%%%%%%%%%%%%%%%%%%%%%%%%%%%%%%%%%%%%%%%%%%%%%%
%%%%%%%%%%%%%%%%%%%%%%%%%%%%%%%%%%%%%%%%%%%%%%%%%%%
\section{The Mini-Max Algorithm using Pohozaev (MMAP)}\mbox{}
\label{algorithm}
\vspace{0.2cm}
\\
The general approach in solving numerically the proposed problem is the following: we restate the problem in a variational formulation on a Hilbert space with a constraint that defines the Pohozaev manifold $\mathcal{P}$ and use the steepest descent method allied with projections on $\mathcal{P}$ to find minima of the functional $I$ constrained to the direction found by the former. By iterating such a process, we arrive at the minimum of $I$ constrained to $\mathcal{P}$, which is the ground state solution obtained by the Mountain Pass theorem. The formulated algorithm is derived from the rigorous theoretical results aforementioned and it converges to the positive ground state solution of problem (\ref{prob}). The main idea is to descend along paths projected on the Pohozaev manifold $\mathcal{P}$, which are precisely $\gamma(t) = u(\frac{\cdot}{t})$ constructed from Lemma \ref{A1}. 

In the work of Choi and McKenna \cite{YP}, a constructive form of the Mountain Pass Theorem, first formulated by Aubin and Ekeland \cite{AE}, was implemented numerically by allying the finite element method with a method of steepest descent. This was done by starting with a local minimum and connecting it with a path to a point $e$ with $I(e) \le 0$ of lower altitude (Theorem \ref{mountainpass}), finding the maximum of $I$ along this path, then deforming it in such a way as to make the maximum along the path decrease as fast as possible and, finally, if that maximum turns out to have been a critical point, they stop, or else, repeat this process. They apply the algorithm in a rectangle to a homogeneous superlinear nonlinearity of type $u^p$, $1<p<2^*$, 
but this
algorithm has been applied to problems with no symmetry assumptions, even on unbounded domains.

A couple of years after \cite{AE}, Ding and Ni \cite{Ding} showed that a solution to the more general problem (\ref{pri}) with the nonlinearity obeying a monotonicity condition on a bounded domain (but also in $\mathbb{R^N}$) exists by a constrained minimization argument on the so called Nehari manifold. Then, Chen, Ni and Zhou \cite{GJW}  used this approach to adapt the preceding algorithms and solve for more general bounded domains with projections on Nehari manifold. However, the limitation of this idea is that a unique projection is required in order to apply the constrained minimization problem successfully which, in turn, depends on the monotonicity assumption. We weaken this condition by, rather than constraining the problem to the Nehari manifold, following with the clever idea of projections on the Pohozaev manifold since, for (\ref*{pri}), they are always guaranteed to be unique. This extends our framework to more general problems, including non-homogeneous superlinear problems in unbounded domains, as well as asymptotically linear problems, homogeneous or not. 

Besides \cite{GJW}, as already mentioned, the numerical minimization under a general constraint, its application to the Nehari manifold and its relation to the Mountain Pass Algorithm were studied in \cite{JH}. Concerning the actual constraint of the Pohozaev manifold, our algorithm is initialized in a different manner: while in \cite{JH} one would need to generate a discretized path connecting two points $e_1$, $e_2$ in the Pohozaev manifold $\mathcal{P}$ (taken as local minima of the associated functional, numerically found using a constrained steepest descent method), where this path would be represented numerically by a collection of finite points, and so find the maximum of $I$ along such a path, our algorithm takes a function $w \in H^1(\Rn)$ such that $\int_{\Rn} G(w) > 0$ and finds the maximum of the associated functional restricted to $\mathcal{P}$ by means of a direct formula using the parameter $t$ in (\ref{t_proj}),  and hence this approach is expected to lighten the computational cost at this step.

In our work, the proposed algorithm can, in fact, be understood as the constrained steepest descent method of \cite{JH}, applied here to the Pohozaev manifold, with one main difference: in \cite{JH} the orthogonal projection of the gradient to the tangent space to the manifold is used, whereas here a different kind of projection is employed (Lemma \ref{lemcoercive}), as we always reproject on $\mathcal{P}$ as we descend along the steepest descent direction. Furthermore, since by Theorem \ref{theoremjeanjean} the ground state solution corresponds to the minimum on the Pohozaev manifold, our algorithm should be fitter in finding this minimal action solution.

The general idea of the new algorithm is made clear in the sequel:\\

{\bf Step 1.} Take an initial guess $ w_0 \in H^1(\mathbb{R}^N)$ such that  $w_0 \not = 0$  and $ \int_{\Rn} G(w_0 ) > 0$, under the assumption that $0$ is a local minimum of $I$, since $I$ has the Mountain Pass geometry;\\

{\bf Step 2.}  Find $t_*>0$ by (\ref{t_proj}) such that
\begin{equation}
  I \left(w_0\left(\displaystyle{\frac{.}{t_*}}\right)\right) = \max_{t>0}I\left(w_0\left(\displaystyle{\frac{.}{t}}\right)\right),
  \end{equation}
and set $w_1 = w_0\left(\displaystyle{\frac{.}{t_*}}\right)$. This is possible because $\int_{\Rn} G(w_0) > 0$ and hence one can use Lemma \ref{A1};\\

{\bf Step 3.}  Find the steepest descent direction $v \in H^1(\mathbb{R}^N)$ at $w_1 \in H^1(\mathbb{R}^N)$, from (\ref{steepest}), obtaining $v =- \nabla I (w_1 ) $.  If $\|v\| < \varepsilon$, then output and stop. Else, calculate $\hat v = v/2\mu$, such that $\mu = ||v||/2$ and $||\hat v|| = 1$, and then go to the next step.\\

{\bf Step 4.} 
For $0<\alpha_0$ small, there exists $t(\alpha_0)$ such that $(w_1 + \alpha_0 \hat v) \left (\frac{\cdot}{t(\alpha_0)} \right) \in \mathcal{P}$. Fix $K \in \mathbb{N}$ and iterate $\alpha_k := k \alpha_0$, for $k \in \mathbb{Z} $, $1\leq k \leq K$ and $(w_1 + \alpha_k \hat v) \left (\frac{\cdot}{t(\alpha_k)} \right) \in \mathcal{P}$. 
In view of Lemma \ref{lemcoercive}, we can either find $\hat \alpha$ such that $$I \left ((w_1 + \hat \alpha \hat v) \left (\frac{\cdot}{t(\hat \alpha)} \right) \right) = \min \limits_{\alpha_k} I \left ((w_1 +  \alpha_k \hat v) \left (\frac{\cdot}{t( \alpha_k)} \right) \right) $$ or such a minimum is not attained and $$
I(w_1+\alpha \hat v) \leq I(w_1 + \alpha \hat v(\frac{\cdot}{t_\alpha})) < I(w_1).$$ In the former case, proceed to Step 5. In the latter, let $1 \leq k_0 \leq K$ be the largest $k$ such that $ \int_{\Rn} G(w_1+k_0 \alpha_0 \hat v ) > 0$, return to Step 1 and consider a new initial guess $w_0:= w_1+ k_0\alpha_0 \hat v$. 
\\

{\bf Step 5.} Redefine
$w_0:= w_1 + \hat \alpha \hat v$.
 Go to Step 2. \\

%%%%%%%%%%%%%%%%%%%%%%%%%%%%%%%%%%%%%%%%%%%%%%%%%%%%%%%%%%%%%%%%%%%%%%%%
%%%%%%%%%%%%%%%%%%%%%%%%%%%%%%%%%%%%%%%%%%%%%%%%%%%%%%%%%%%%%%%%%%%%%%%%\
\section{Numerical implementation of the algorithm}

The algorithm presented in the previous section is applicable for general nonlinearities, which satisfy the hypotheses stated in the introduction and can be applied to problems with no symmetry assumptions, provided one works in a scenario to regain compactness in $\mathbb{R^N}$. However, for the sake of simplicity, we are going to implement for nonlinearities which satisfy conditions that imply that the ground state solution is radially symmetric.

\subsection{Radial symmetry}
Since $f\in\mathcal{C}^{1}(\mathbb{R})$ is odd and $f$ satisfies $(f1)-(f4)$, a classical
result of Berestycki and Lions \cite{bl} establishes the existence of a ground state
solution $\omega\in\mathcal{C}^{2}(\mathbb{R}^{N})$ to the problem
(\ref{prob}), which is positive, radially symmetric and decreasing in the
radial direction (see Theorem 1 in \cite{bl}). In fact, by Li and Ni, if $g'(0) \le 0$ then any positive solution of (\ref{pri}) is, up to a translation, radially symmetric (see Theorem 1 in \cite{lini}). Moreover, this radial positive solution is unique when extra hypotheses are satisfied (see Serrin and Tang \cite{serrin_tang}).
Therefore, we are going to restrict ourselves to the $H_{rad}^1(\mathbb{R}^N)$, the subspace of radial functions of $H^1(\mathbb{R}^N)$, without loss of generality. 
Since the functions are all radially symmetric, the integrals are calculated in the real line by a change from cartesian to spherical variables, with $u(r,\theta, \phi) = u(r)$.
Moreover, all the partial differential equations involved are transformed into ordinary differential equations in the radius variable. Since we are working on $\mathbb{R}^3$, the problem is reduced to:

\begin{equation}
\begin{cases}
 -u''(r) -\dfrac{2}{r} u'(r) + \lambda u(r) = f(u(r)), \; r>0, \\
\vspace{0.1cm}
\;\;\; u(r) \rightarrow 0, \; r \rightarrow +\infty,\\
\vspace{0.1cm}
\;\;\; u'(0) = 0.
\end{cases}
\label{problem_radial}
\end{equation}
Moreover, our functional $I$, projected on $\mathcal{P}$ depends on:
\begin{equation*}
h(t):=I\left(u\left(\frac{.}{t}\right)\right)=4\pi\left( \frac{t^{2}}{2}\int_{0}^{+\infty}|u'|^2 r^2 dr+\frac{\lambda t^3}{2}\int_{0}^{+\infty}|u|^2 r^2 dr-t^3 \int_{0}^{+\infty}F(u) r^2 dr \right)
\end{equation*}
and its derivative is given by:
\begin{equation*}
h'(t):=I'\left(u\left(\frac{.}{t}\right)\right)=4\pi\left(t\int_{0}^{+\infty}|u'|^2 r^2 dr+3\frac{\lambda t^2}{2}\int_{0}^{+\infty}|u|^2 r^2 dr-3t^2 \int_{0}^{+\infty}F(u) r^2 dr \right).
\end{equation*}
Therefore, the value of $t$ that projects $u$ on $\mathcal{P}$ is directly given by $h'(t) = 0$:
\begin{equation}\label{t_proj2}
t^2=\frac{\displaystyle\int_{0}^{+\infty}|u'|^2 r^2 dr}{3\displaystyle\int_{0}^{+\infty}\left[-\frac{\lambda }{2}|u|^2+F(u)\right] r^2 dr}=\displaystyle\frac{\displaystyle\int_{0}^{+\infty}|u'|^2 r^2 dr}{3\displaystyle\int_{0}^{+\infty}G(u) r^2 dr}\cdot
\end{equation}

\subsection{Discretisation and numerical methods}

We start by noting that the algorithm presented in Section \ref{algorithm} does not involve solving directly (\ref{problem_radial}) and, therefore, it does not need to be discretised or treated numerically otherwise. The parts of the algorithm that need to be treated numerically are the calculations of the functional $I(u)$ and of the projection parameter $t$, which involve the calculation of integrals, and the calculation of the steepest descent direction, which is given by the Poisson problem in equation (\ref{steepest}). We will describe briefly below how these were implemented.

The integrals involved in the MMAP algorithm were evaluated using a standard trapezoidal rule,
\begin{equation}
\int_a^bh(r) dr = \left(\frac{h(a)+h(b)}{2}+\sum_{i=1}^{M-1}h(r_i)\right)\Delta r + \mathcal{O}(\Delta r^3),
\end{equation}
for  a function $h(r)$, where $\Delta r = 1/M$ is the space step taken to discretise the interval $[a,b]$ in which the integral is defined. Note that the truncation error in this approximation is $\mathcal{O}(\Delta r^3)$.

The steepest descent direction, given by the solution of (\ref{steepest}), can also be written in terms of a radially symmetric problem, that is:
\begin{equation}
  +v''(r) +\frac{2}{r} v'(r) - \lambda v=   -w_1''(r) -\frac{2}{r} w_1'(r) + \lambda w_1 - f(w_1),
  \label{steepest_radial}
\end{equation}
with $w_1$ given from Step 2, and with boundary conditions given by
\begin{equation}
  v(r) \rightarrow 0, \; r \rightarrow +\infty, ~~\textrm{and}~~ v'(0) = 0.
  \label{bc_steepest_radial}
\end{equation}
We use second order centered finite differences to discretise (\ref{steepest_radial}). Defining $v_i = v(r_i)$, and similarly with $w_1$, we obtain the discretised version of (\ref{steepest_radial}) as:
\begin{equation}
  \alpha v_{i+1} + \beta v_{i} + \gamma v_{i-1} = \alpha' {w_1}_{i+1} + \beta' {w_1}_{i} + \gamma' {w_1}_{i-1} + f({w_1}_{i}),
  \label{steepest_discrete}
  \end{equation}
with
\begin{equation}
  \alpha = \displaystyle\frac{1}{\Delta r ^2} + \frac{1}{r_i \Delta r}, ~\beta = -\displaystyle\left( \frac{2}{\Delta r^2} + 1\right), ~\gamma = \displaystyle\frac{1}{\Delta r ^2} - \frac{1}{r_i \Delta r}
\end{equation}
  and
\begin{equation}
  \alpha' = -\alpha,~\beta' =\displaystyle \frac{2}{\Delta r^2} + \lambda, ~\gamma'= - \gamma.
  \end{equation}

We now observe that (\ref{steepest_discrete}) is a linear system of $M+1$ equations in terms of $v_i$, which is solved by an SOR method with relaxation parameter chosen as $tol_{SOR} = 1.9$.

Note that the boundary conditions of (\ref{steepest_radial}), given in (\ref{bc_steepest_radial}), also have to be discretised. The first boundary condition in (\ref{bc_steepest_radial}) is taken to be $v_M=0$, where $v_M = v(R^*)$, with $R^*$ large enough so that this approximation is adequate. We discuss the influence of the choices of $R^*$ in Subsection \ref{size_domain}. The second boundary condition in (\ref{bc_steepest_radial}) is discretised using a second order forward finite difference, which gives $v_0 = \displaystyle \frac {4v_1-v_2}{3}$.

Finally, we note from Section \ref{sec_def_stepest} that the steepest descent function has to be normalised, and therefore, the solution obtained in (\ref{steepest_discrete}) has to be divided by ${2\mu}$, as discussed in Step 3, so that we can control with $\alpha_k$ how much we descend along the steepest descent direction.  However, we must keep track of the actual value of the norm of the steepest descent function found, since we need it to assess the convergence of the algorithm, as stated on the Step 3 in Section \ref{algorithm}.

\subsection{Pohozaev projection step}

Given an initial guess $w_0 \in H_{rad}^1$, one can verify that $\int_{\Rn}G(w_0) > 0$, which is done by calculating this integral using the trapezoidal rule on our mesh $\Omega$. Then, by Lemma \ref{A1}, we calculate $t_*$ by solving for $t$ in (\ref{t_proj2}). In the process of setting $w_1 = w_0\left(\displaystyle{\frac{.}{t_*}}\right)$, the points $r_i$ in $\Omega$ may not be appropriate for $w_1$ because the rescaled points $\dfrac{r_i}{t_*}$ may not be in the mesh $\Omega$. In order to avoid having to interpolate the function $w_1$ to obtain its projection on $\mathcal{P}$, we actually rescale the interval $\Omega$ by taking $r_i \rightarrow t^* r_i$ so that we find the new $r_i$-coordinates for the values of $w_1$ that we already have calculated on the mesh.

Moreover, on Step 4, projections of the line $w_1 + \alpha \hat v$ with varying $\alpha$ are calculated for $t(\alpha)$ by, again, solving for $t$ in (\ref{t_proj2}). Note that this is done in the same setting as Step 2. When evaluating the projection $w_1 + \alpha \hat v$ in $I$, the level of the functional decreases until it reaches a minimum, which is guaranteed by Lemma \ref{lemcoercive}. This will be further explained in the next section.

\subsection{Descending on Pohozaev manifold}
 First, evaluate the functional $I$ on $w_1 \in \mathcal{P}$. We consider a given $\alpha_0$ (typically we choose $\alpha_0 = 10^{-1}$) and we evaluate $I\left((w_1 + \alpha_k \hat v)  \left(\dfrac{\cdot}{t( \alpha_k)}\right)\right)$, for increasing integers $k$, until we find $k = \bar k$ such that 
\begin{equation}\label{descent_num}
  I\left((w_1 + \alpha_{\bar k} \hat v)  \left(\frac{\cdot}{t( \alpha_{\bar k})}\right)\right) > I\left(w_1 + \alpha_{\bar k - 1} \hat v)  \left(\frac{\cdot}{t( \alpha_{\bar k - 1})}\right)\right).
\end{equation}
When this $\bar k$ is found, we redefine $w_1^{new} := w_1 + \alpha_{\bar k - 1} \hat v$ and project it on $\mathcal{P}$. We then take $\alpha_0 \leftarrow \alpha_0/10$ and repeat the procedure until we reach the minimum of $I$ along the steepest descent direction $\hat v$ with the desired accuracy (tipically, we stop when we find the minimum for $\alpha_0 = \alpha_{min} = 10^{-10}$).
It should be noted that, depending on the local topology of $I(u)$, the algorithm might identify a local minimum for which, after the refinement of $\alpha_0$ takes place, we have both 
\begin{equation} \label{comparison1}
  I\left((w_1 + \alpha_k \hat v)  \left(\frac{\cdot}{t(  \alpha_k)}\right)\right) >  I\left((w_1 + \alpha_{k-1} \hat v)  \left(\frac{\cdot}{t( \alpha_{k-1})}\right)\right)
\end{equation}
and
\begin{equation} \label{comparison2}
   I\left((w_1 + \alpha_k \hat v)  \left(\frac{\cdot}{t( \alpha_k)}\right)\right) > I\left(w_1 +\alpha_{k+1} \hat v)  \left(\frac{\cdot}{t( \alpha_{k+1})}\right)\right).
\end{equation}
In fact, the algorithm has found a local maximum instead. The strategy in this case is to choose the function that gives the minimum on the right hand side of equations (\ref{comparison1}) and (\ref{comparison2}), and set it as $w_1^{new}$. The descent procedure would then carry on as described in (\ref{descent_num}).

\begin{remark}
	Our algorithm is not exempt from finding solutions other than the ground state.
	If the second case in Step 4 repeatedly leads to a curve over which the associated energy functional $I$ asymptotes a constant value, then Step 3 may give a steepest descent direction for which its norm goes to zero, and so we have found a critical point $w_c$.
	In our applications where the ground state is positive radially symmetric, it suffices to check if $w_c$ changes sign or not. At this point, we check if $w_c $ is a positive function for, if it is not, we return to Step 1 by taking an initial guess $w_0$ s.t. $I(w_0) < I(w_c)$ in order to proceed with the search for the ground state solution.
\end{remark}

%Then, consider $\alpha_0 = 10^{-1}$ and evaluate $I$ on $(w_1 + \alpha_0 \hat v)  (\frac{\cdot}{t( \alpha_0)})$ (and analogously on $(w_1 - \alpha_0 \hat v) (\frac{\cdot}{t( \alpha_0)})$). If $I(w_1) > I((w_1 + \alpha_0 \hat v) (\frac{\cdot}{t( \alpha_0)}))$, redefine $w_1 := (w_1 + \alpha \hat v) (\frac{\cdot}{t( \alpha)})$. We proceed in this way with the same $\alpha_0$ until $\alpha = k \alpha_0$, where $I(w_1) < I((w_1 + \alpha \hat v) (\frac{\cdot}{t( \alpha)}))$. Then, return to $\alpha = (k-1)\alpha_0$, put $w_1 := (w_1 + \alpha)(\frac{\cdot}{t(\alpha)})$ and repeat this procedure, now with $\alpha_0 = 10^{-2}$. In our simulations, we have chosen to refine $\alpha_0$ up to $10^{-10}$.

\section{Numerical and parametrical study of the MMAP algorithm} \label{numerical}
 In order to assess the influence of the numerical parameters on the solution obtained by MMAP and on the behaviour of the algorithm, we will discuss in detail the influence of the numerical parameters on the convergence of the algorithm. The main numerical parameters which appear on MMAP are the following: the discretization size ($\Delta r$), the final resolution on the descent algorithm ($\alpha_{f}$), the size $R^*$ of the initial interval and the frequency of projections on $\mathcal{P}$ of the functions after the descent stage. To this end, we will choose $f(u)=u^3$ and $\lambda = 1.0$ in problem (\ref{problem_radial}), with standard set of parameters $M = 1001$, $\alpha_{min} = 10^{-10}$, $tol_{SOR} = 10^{-10}$ and $R^* = 1.0$ for all the simulations presented in this section, unless explicitly stated otherwise.

\subsection{Validation}

In order to verify that the implementation of MMAP is correct, we compare the result with the solution found via a different method (the mid-point method with Richardson extrapolation implemented in Maple 2018). The singularity of the equation is dealt with by assuming that the boundary conditions are defined as $u=\varepsilon_1$ at $r=100$, at which point we expect that $u$ is sufficiently close to zero, and $u'(\varepsilon_2)=\varepsilon_3$, where $\varepsilon_1= \varepsilon_2=\varepsilon_3 = 10^{-35}$. The results are plotted in Figure \ref{maple}. We observe a very good agreement with the result obtained by the aforementioned method.

\begin{figure}[h!]
  \centering
	\includegraphics[width=0.7\linewidth]{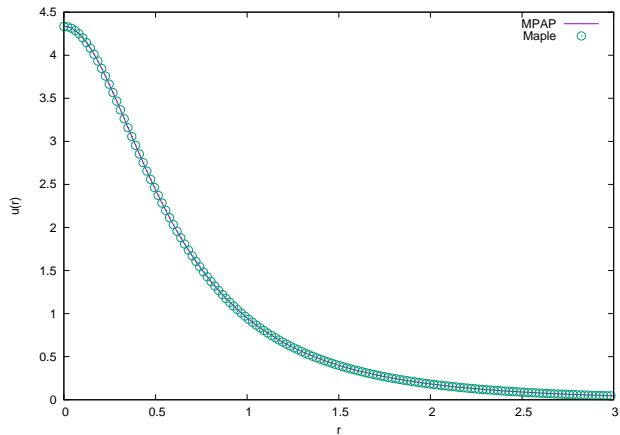}
	\caption{Comparison between the results given by MMAP (heavy line) and Maple (circles) for $f(u) = u^3$ and $\lambda = 1.0$, with standard set of parameters.}
	\label{maple}
\end{figure}

\subsection{Convergence}

We now assess the influence of the discretisation size $\Delta r$ on the results. Figure \ref{conv1} shows a comparison of the solution given by MMAP for several values of $\Delta r$. On Figure \ref{conv1} (left), we plot the solution obtained for different mesh sizes, corresponding to $\Delta r$ ranging from $0.02$ to $0.0004$, and we observe that no significant differences on the profile of the solution can be noticed. However, we do note that there is a difference on the tail of the solution when $\Delta r$ changes. Nevertheless, the differences are minor and due to the fact that the final length of $\Omega$ is actually calculated by the algorithm during the projection step, and will change depending on $M$ and on the initial value of $R^*$. This will be discussed further on Section \ref{size_domain}. Figure \ref{conv1} (right) indicates that this phenomenon does not compromise significantly the value of $I(u)$ for sufficiently large $M$. For $M > 400$, the differences among the solutions are negligible and the differences between consecutive curves and the values of $I(u)$ become smaller and smaller as $M$ grows.

\subsection{Initial size of the domain} \label{size_domain}
The point where the boundary condition at infinity is imposed at the beginning of the simulations defines the size $R^*$ of the domain $\Omega$  in which we define $w_0$. We have to choose $R^*$ sufficiently large, so that the numerical boundary condition is as realistic as possible, since we are looking for solution in $H_{rad}^1(\Rn)$. The effects of the choice of $R^*$ on the final results is assessed by measuring $||v||$ at the end of the simulations for different values of $R^*$. The results, shown in Figure (\ref{domainsize}), indicate that the smaller $R^*$, the larger the final $||v||$ will be. As $R^*$ increases, we observe that $||v||$ decays as ${R^*}^{-2}$ until it reaches a plateau at around $R^*=10$. For larger values of $R^*$, there is no significant change on the final value of $||v||$. This indicates that, for each choice of $\Delta r$, there is a minimum critical value of $R^*$ that must be chosen in order to achieve the best possible value of $||v||$ in the end of the simulations. Even further, for two different choices of $\Delta r$, as $R^*$ increases, we observe that the plateau is reached for the same $R^*$.

\begin{figure}[h!]
  \centering
      \includegraphics[width=0.49\linewidth]{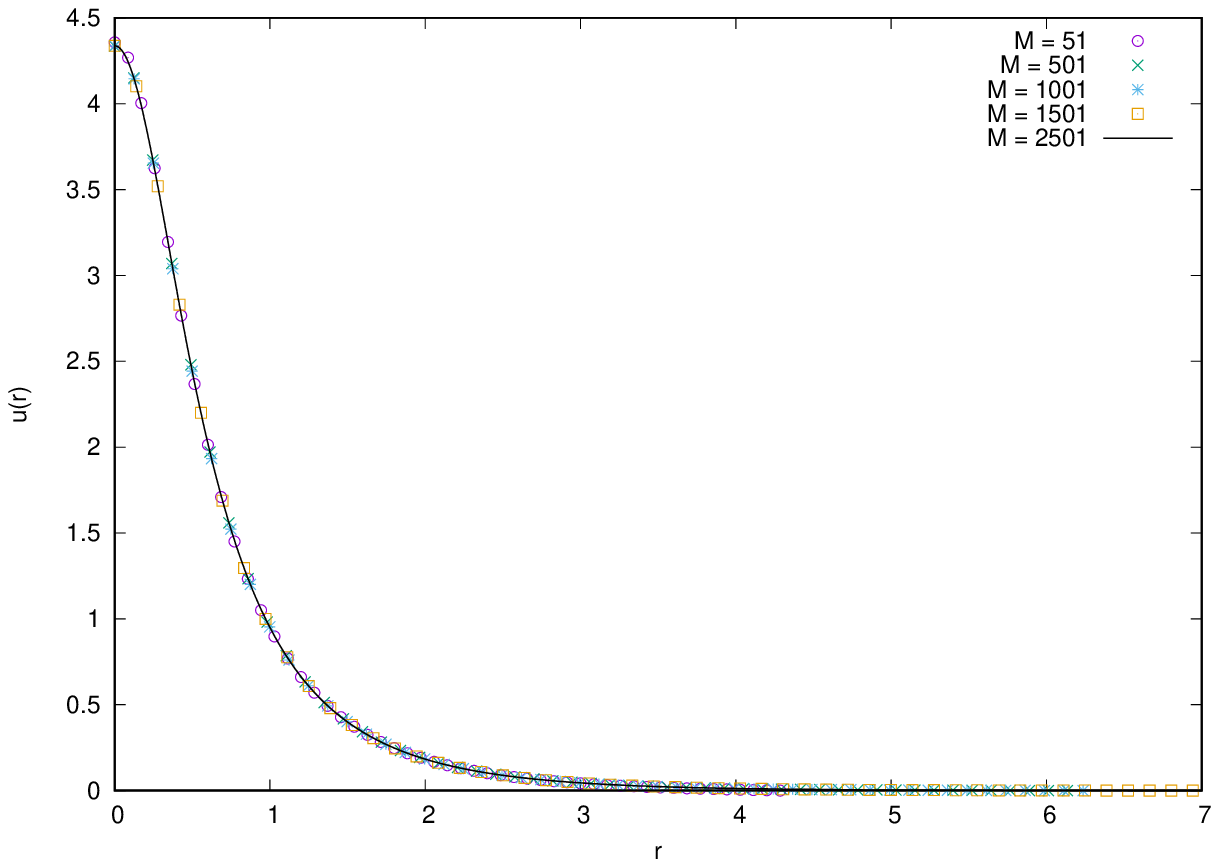}
      \includegraphics[width=0.49\linewidth]{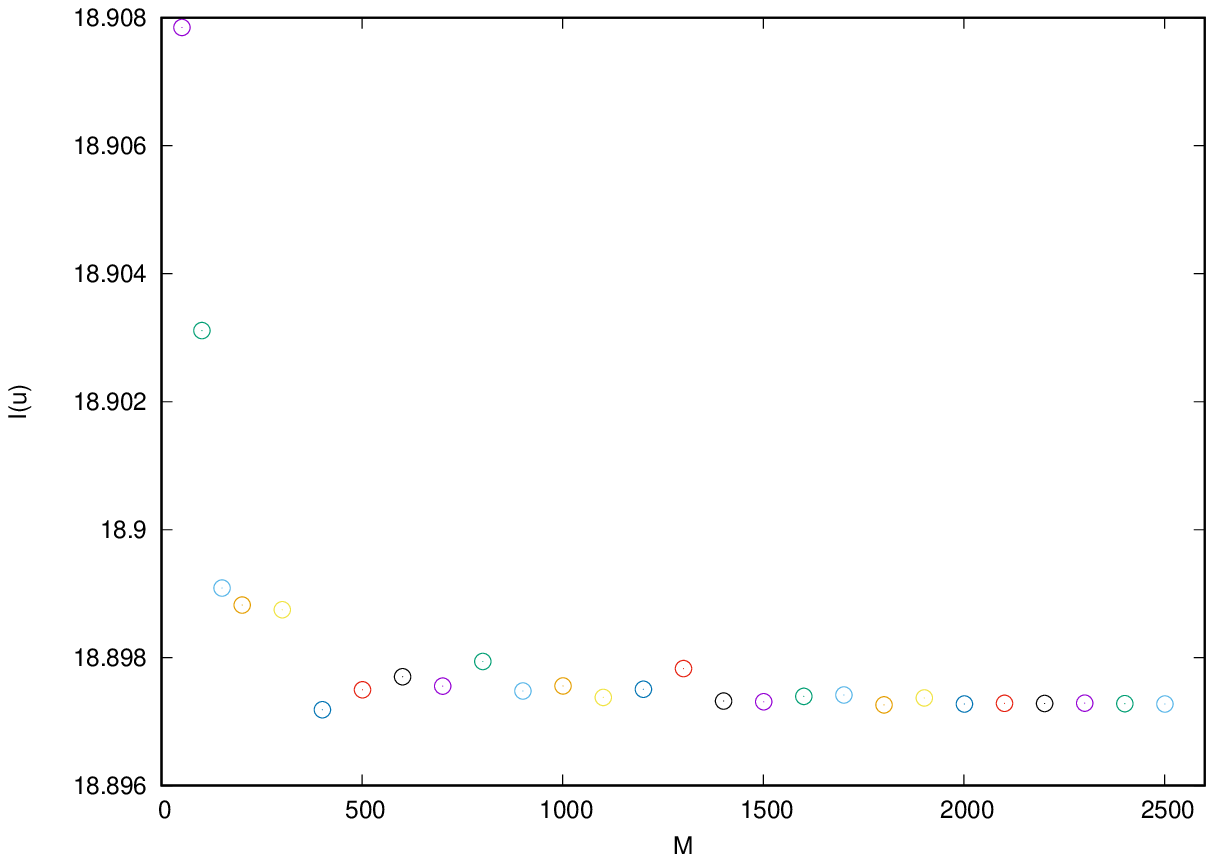}
	\caption{Comparison of the solution obtained by MMAP for different values of $M$ (left) and the values of $I(u)$ (right).}
	\label{conv1}
\end{figure}

\begin{figure}[h!]
	\centering
	\includegraphics[width=0.7\linewidth]{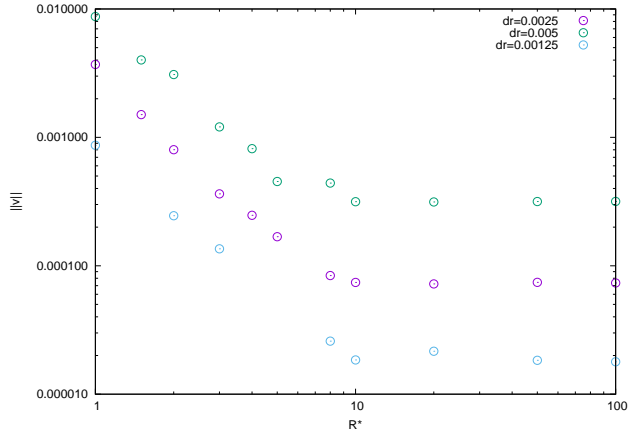}
	\caption{Influence of the initial size $R^*$ of the domain $\Omega$ versus the norm of the steepest descent direction obtained on the last iteration, for $\Delta r = 
	0.00125; 0.0025; 0.005$. The decay of $||v||$ to the minimum value obtained for large $R^*$ is roughly given by ${R^*}^{-2}$.}
	\label{domainsize}
\end{figure}

\subsection{Robustness}

Finally, we compare the results obtained by the standard choice of numerical parameters of our algorithm with a coarser mesh in which we have also reduced the values of $\alpha_{min}$ to $10^{-2}$ and the tolerance for the SOR algorithm $tol_{SOR}$ to the determination of the steepest descent direction also to $10^{-2}$. We observe very good agreement between the results, that is, the overall profile of the solution in the coarser approximation reproduces the shape of the refined solution, with the exception of the values close to $r =0$. In fact, the value of the functional $I(u)$ is only overestimated by around $0.1\%$ when the solution found by using the coarser parameters is used. This indicates that the algorithm is very robust and converges to the desired function even with very limited computational resources.

\begin{figure}[h!]
  \centering
	\includegraphics[width=0.7\linewidth]{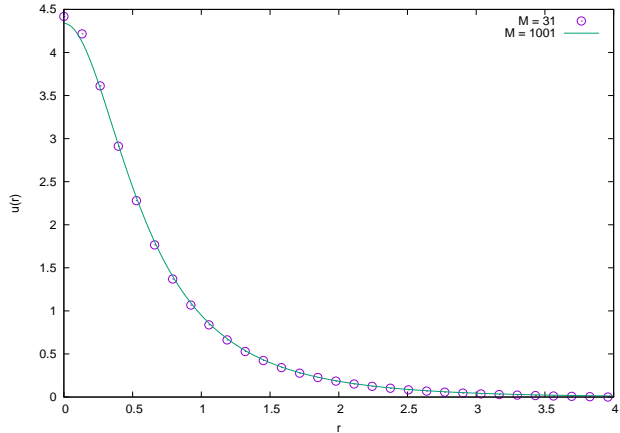}
	\caption{Comparison between the results given by the standard set of parameters (heavy line), and by the coarser mesh (circles), in which $M = 31$, $\alpha_{min} = 10^{-2}$ and $tol_{SOR} = 10^{-2}$.}
	\label{robustness}
\end{figure}

\subsection{Other remarks}
Solving the Poisson equation (\ref{steepest_radial}) is expected to be the most computationally expensive part of our algorithm and so, a parameter which must be given a good amount of significance is the tolerance for the convergence of the SOR. Since we are unaware of the local topology of the functional $I$, our initial guess $w_0$ from Step 1 might have high energy or be far from $\mathcal{P}$. Being so, at first, the tolerance $tol_{SOR}$ on the calculation of the steepest descent direction might be relaxed but, once we get close enough to the sought mini-max solution, this parameter must be refined.

For the choice of the initial guess $w_0$ in Step 1, the restriction $\int_{\Rn} G(w_0) > 0$ is mild compared to the initial guesses in the other algorithms in the literature.  

We note that Step 4, which involves the reprojection to $\mathcal{P}$ of the functions obtained during the descent phase of the algorithm, can be relaxed to a less computationally intensive version if we choose to perform the reprojections every $N_r$ steps. In fact, we have run several tests for $N_r$ ranging from 2 to 100 and no noticeable changes were observed neither on the shape of the solution nor on the value of $I(u)$ for the case $f(u) = u^3$, $\lambda = 1.0$ with the standard set of parameters.

\section{Applications to superlinear and asymptotically linear problems}
\subsection{The case $f(u) = u^3$ in $\mathbb{R}^3$}
For superlinear nonlinearities $|u|^p$, $1<p<2^* - 1$, the algorithms proposed prior to this work were able to tackle problem (\ref{prob}), which can also be managed by our algorithm. We can, apart from the validations performed in the previous section, assess its precision in calculating the maximum of the solution, which is attained in the origin, by recalling that the positive solution is radially symmetric and decreasing in the radial direction. Simple calculations show that
\begin{equation}\label{heights}
u_{\lambda}(r) = \lambda^{\frac{1}{p-1}} u_1(\sqrt{\lambda}\,r),
\end{equation}
is the positive solution of problem (\ref{prob}), with $f(u) = u^3$, where $u_1$ is the positive solution with $\lambda = 1.0$.

In Table \ref{heights_u3xx} we present the maximum heights $u(0)$ for several values of $\lambda$, obtained by our algorithm. On the other hand, assuming that the height of $u_1$ is given by our algorithm, that is, $u_1(0) = 4.33691$, we calculate $u_{\lambda}(0)$ for $\lambda = 0.1, 0.5, 2.0, 3.0$ using (\ref{heights}). The comparison of the heights $u(0)$ obtained numerically and the height $u_{\lambda}(0)$ obtained by (\ref{heights}) gives an error that is less than $0.1 \%$. Figure \ref{profiles_u3} shows the profiles of the solutions of problem (\ref{prob}) obtained by the algorithm for those values of $\lambda$.

\begin{table}[h]
	\centering
	\begin{tabular}{c|c|c|c|c}
		$\lambda$ & $u(0)$  & $||v||$    & $I(u)$   & error \\
		\hline
		0.1 & 1.37148 & $5.6 \cdot 10^{-4}$ & \hspace{5pt}5.97615  & $< 0.1\%$    \\
		0.5 & 3.06678 & $4.0 \cdot 10^{-4}$ & 13.36246 &  $< 0.1\%$  \\
		1.0 & 4.33691 & $6.0 \cdot 10^{-4}$ & 18.89734 & --  \\ 
		2.0 & 6.13321 & $7.7 \cdot 10^{-4}$ & 26.72488 & $< 0.1\%$  \\
		3.0 & 7.51153 & $9.3 \cdot 10^{-4}$ & 32.73110 & $< 0.1\%$  
	\end{tabular}
	\caption{Results for $u(0)$ for the case $f(u) = u^3$ obtained for different values of $\lambda$. In this table, we present the value of the norm of the steepest descent $||v||$ at the end of the calculations,  of $I(u)$ for the solution and the relative error of $u(0)$ with respect to the theoretical value $u_{\lambda}(0)$ in (\ref{heights}).}
	\label{heights_u3xx}
\end{table}

\begin{figure}[h!]
	\centering
	\includegraphics[width=1.0\linewidth]{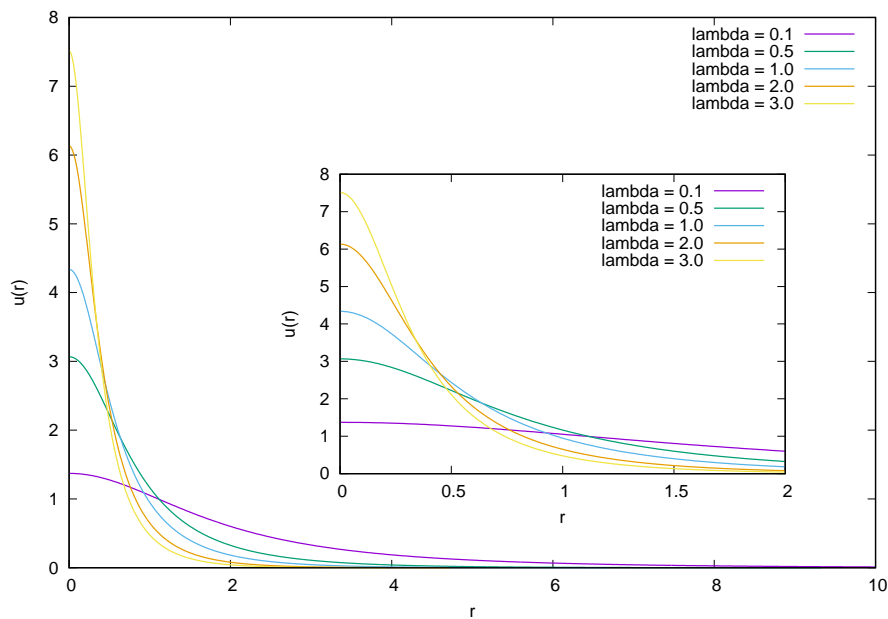}
	\caption{Profile of solutions for $f(u) = u^3$ for different values of $\lambda$.}
	\label{profiles_u3}
\end{figure}

\subsection{The case $f(u) = \displaystyle\frac{u^3}{1 + su^2}$ in $\mathbb{R}^3$}
The asymptotically linear problems $\displaystyle \frac {|u|^p}{1 + s|u|^{p-1}}$, $1<p<2^* - 1$, $0<\lambda s < 1$, satisfy the monotonicity condition $f(u)/u$ increasing for $u>0$ and so, could be handled by the algorithms in \cite{GJW} - since projections on the Nehari manifold rely on this hypothesis - but were not attempted. Using MMAP, we have found the ground state solution in the case $f(u) = \displaystyle \frac {u^3}{1+su^2}$. Figure \ref{surface_asym1} shows the solution for this nonlinearity with $\lambda = 1.0$ and $s = 0.5$. For reference purposes, we include on Table \ref{valores} the values of $u(0)$ for the positive solution $u$. Also, Figure \ref{energy_functional_asym1} shows the descending energy of the functional from the initial guess $w_0$, here chosen as $100 \, e^{-10 \, r^2}$, to the solution. 
For validation purposes, we present on Table \ref{tabela3} a list of values of the solution found for $\lambda =1.0$ and $s=0.5$.

\begin{figure}[h!]
	\centering
	\includegraphics[width=1.0\linewidth]{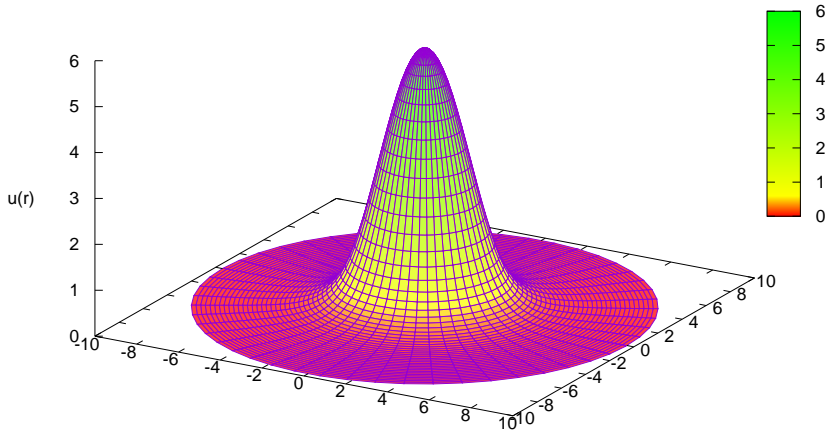}
	\caption{Surface plot of solution for $f(u) = \displaystyle\frac{u^3}{1 + su^2}$ with $\lambda = 1.0$, $s = 0.5$. $u(0) = 5.64139 $, $I(u) = 161.92929$, $||{v}|| = 2.5 \times 10^{-4}$.}
	\label{surface_asym1}
\end{figure}

\begin{figure}[h!]
	\centering
	\includegraphics[width=1.0\linewidth]{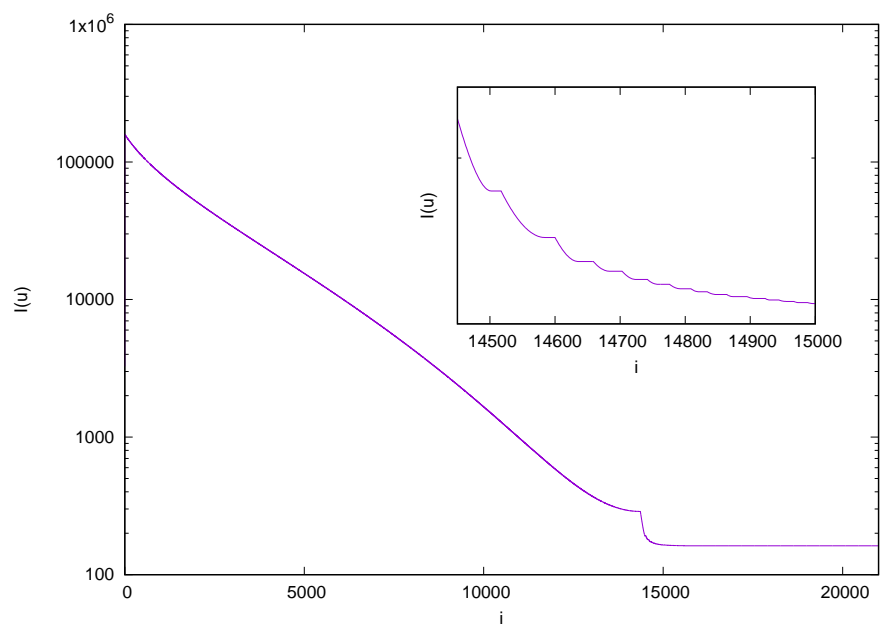}
	\caption{Descending energy of the associated functional along the iterations of the algorithm, for $f(u) = \displaystyle\frac{u^3}{1 + su^2}$ with $\lambda = 1.0$, $s = 0.5$. Logarithmic scale on the y axis.}
	\label{energy_functional_asym1}
\end{figure}

\begin{table}[h]
	\centering
	\begin{tabular}{c|c|cccccc}
          &     & \multicolumn{6}{c}{$\lambda$}     \\
          \hline
          \multirow{7}{*}{$s$} &     & 0.1 & 0.3 & 0.5 & 0.7 & 1.0 & 5.0 \\
          \hline
                     & 0.1 & 1.33183 & 2.23513 & 2.84300 & 3.34310 & 3.99690 & 12.61528 \\
                     & 0.3 & 1.29034 & 2.18677 & 2.87000 & 3.51098 & 4.50062 & --       \\
                     & 0.5 & 1.27125 & 2.22308 & 3.05319 & 3.94794 & 5.64139 & --       \\
                     & 0.7 & 1.26344 & 2.29849 & 3.33592 & 4.65516 & 8.08286 & --       \\
                     & 1.0 & 1.26374 & 2.46503 & 3.98912 & 6.76196 &   --    & --       \\
                     & 5.0 & 1.78424 &   --    &   --    &    --   &   --    & --    
        \end{tabular}
	\caption{ Values of $u(0)$ obtained for the case $f(u) = \displaystyle\frac{u^3}{1 + su^2}$ for several combinations of $\lambda$ and $s$. Note that we can only obtain solutions when $\lambda s < 1$. $M=3501$.}
	\label{valores}
\end{table}

\begin{table}[h!]
	\centering
	\begin{tabular}{c|c||c|c||c|c||c|c}
          $r$ & $u(r)$ &  $r$ & $u(r)$ & $r$ & $u(r)$ & $r$ & $u(r)$ \\
	  \hline
      0.000 & 5.64139  &  2.004 & 2.99197 &5.005 & 0.11309       &8.007 & 1.86676 $\times 10^{-3}$ \\
	  0.100 & 5.63348  &  2.205 & 2.58907 &5.207 & 8.88979	     &8.208 & 8.34267 $\times 10^{-4}$ \\
	  0.201 & 5.60837  &  2.608 & 1.84032 &5.601 & 5.56388 $\times 10^{-2}$  &8.300 & 3.91292 $\times 10^{-4}$ \\
	  0.302 & 5.56672  &  3.002 & 1.23610 &6.003 & 3.45536 $\times 10^{-2}$  &8.351 & 1.55421 $\times 10^{-4}$ \\
	  0.402 & 5.50879  &  3.203 & 0.98899 &6.204 & 2.72241 $\times 10^{-2}$  &8.376 & 3.87317 $\times 10^{-5}$ \\
	  0.604 & 5.34578  &  3.605 & 0.61708 &6.607 & 1.68230 $\times 10^{-2}$  &8.384	& 0.000000 \\
	  1.006 & 4.84857  &  4.007 & 0.37890 &7.000 & 1.03282 $\times 10^{-2}$  &		&  \\
	  1.199 & 4.54191  &  4.201 & 0.29952 &7.202 & 7.93701 $\times 10^{-3}$  & 		&  \\
	  1.601 & 3.80120  &  4.603 & 0.18367 &7.604 & 4.38170 $\times 10^{-3}$  &      &  	
	\end{tabular}
	\caption{ Values of $u(r)$ for $f(u) = \displaystyle \frac{u^3}{1 + su^2}$ for several $r$ with $\lambda = 1.0$, $s = 0.5$.}
	\label{tabela3}	
\end{table}

\section{Enhancement for more general nonlinearities}

The real improvements of our algorithm compared to others in the literature are presented in the next two examples. In order to obtain the positive ground state solution of (\ref{prob}), depending on the nonlinear term $f(u)$ the algorithm MMAP
is applicable and gives the correct solution, whereas other existing algorithms cannot be applied either because it requires unique projections on the Nehari manifold \cite{GJW} or because superquadratic conditions on the nonlinearity $f$ are assumed \cite{YP}.

%\begin{figure}[h!]
%  \centering
%      %{\psfrag{BLA}{$\frac{f(u)}{u}$}}
%	\includegraphics[width=0.7\linewidth]{asym_u3}
%	\caption{}
%	\label{fig:asymu3}
%\end{figure}

%begin{figure}[h!]
%	\centering
%	\includegraphics[width=0.7\linewidth]{asym_u6.eps}
%	\caption{}
%	\label{fig:asymu6}
%\end{figure}

\subsection{Example where $I(tu)$ has two maxima for $t>0$ }
This example illustrates a situation where the functional $I$ evaluated in the direction $tu$, for $t \in \mathbb{R}$, has at least two maximum values at $t_1$ and $t_2$, for instance, and hence the algorithm MPA developed by 
Chen, Ni and Zhou in \cite{GJW}, which takes the unique projection on the Nehari manifold on the direction of the vector $u$ (Step 3), does not work. 

Choosing $F(u) = Bu^3 - Cu^4 + Du^5$ in (\ref{functional}), and so

\begin{equation}\label{nonlinearity_nehari}
	f(u) = 3Bu^2 - 4Cu^3 + 5Du^4,
\end{equation}
with $\lambda = 3.0$, and taking
\begin{equation*}
u(r) = 
\left\{\begin{array}{rcll}
\displaystyle&\frac{1}{\sqrt{4\pi}}&, \qquad \qquad \quad |r| \le R\\
\displaystyle&\frac{1}{\sqrt{4\pi}}& e^{-|R-r|}, \qquad |r| \ge R
\end{array}\right.
\end{equation*}
with $R \approx 3.075$, $A = \displaystyle\frac{||u||^2}{2}$ and positive constants $B, C$ and $D$ such that

\begin{align*}
	I(tu) &= t^2 \displaystyle\frac{||u||}{2} - \int F(tu) \\
		  & = t^2 A - B t^3 \int u^3 + C t^4 \int u^4 - D t^5 \int u^5  \\
		  &= -t^5 + (5 + \sqrt{5})t^4 - 2(4+\sqrt{5})t^3 + 4(1+\sqrt{5})t^2
\end{align*}
gives rise to an example for $I(tu)$ having two maxima. Figure \ref{u_counter} shows $u(r)$ and $I(tu)$. Those two maxima are given by $I(r_1) = I(r_2) = \displaystyle\frac{128}{25\sqrt{5}}$. The profile of the solution for problem \ref{prob} with $f(u)$ as in (\ref{nonlinearity_nehari}), with $\lambda = 3.0$ solved by MMAP is shown in Figure \ref{counter_nehari_sol}.

\begin{figure}[h!]
	\centering
	\includegraphics[width=0.49\linewidth]{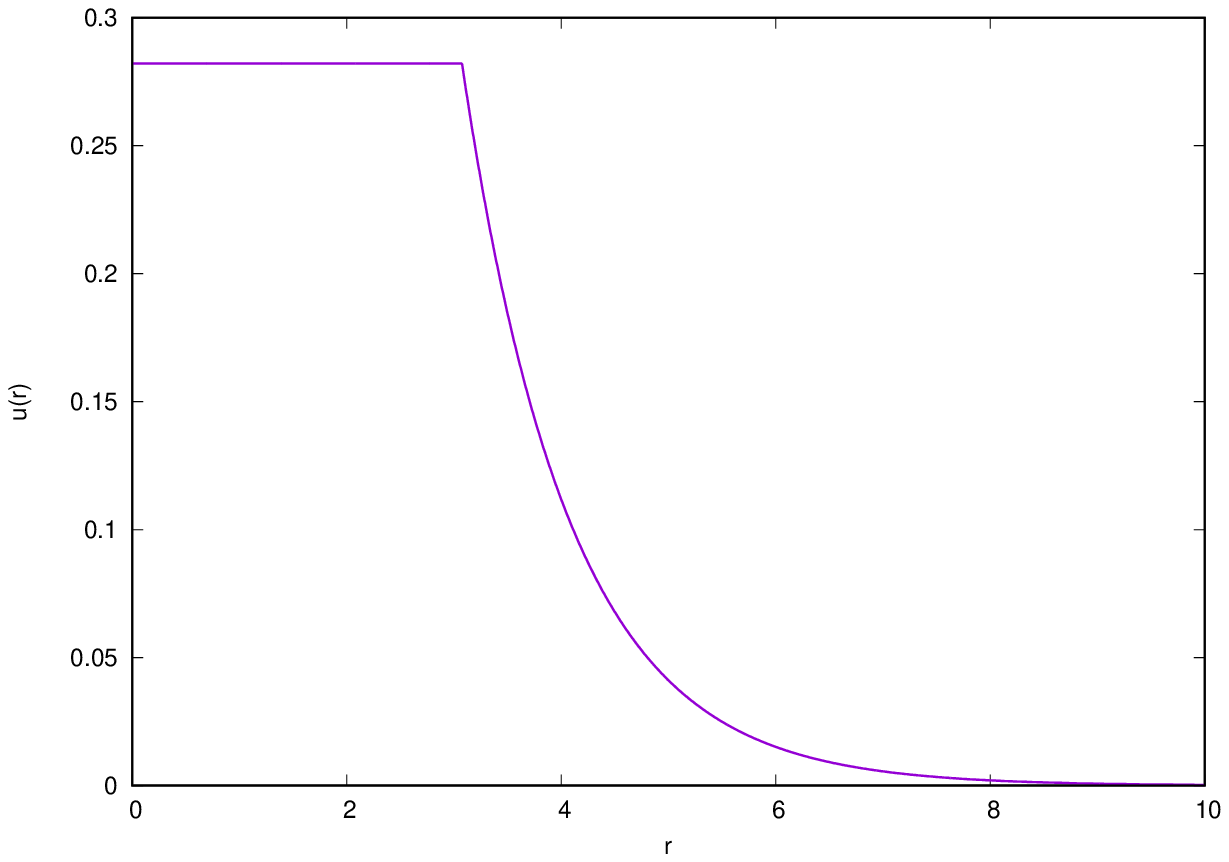}
	\includegraphics[width=0.49\linewidth]{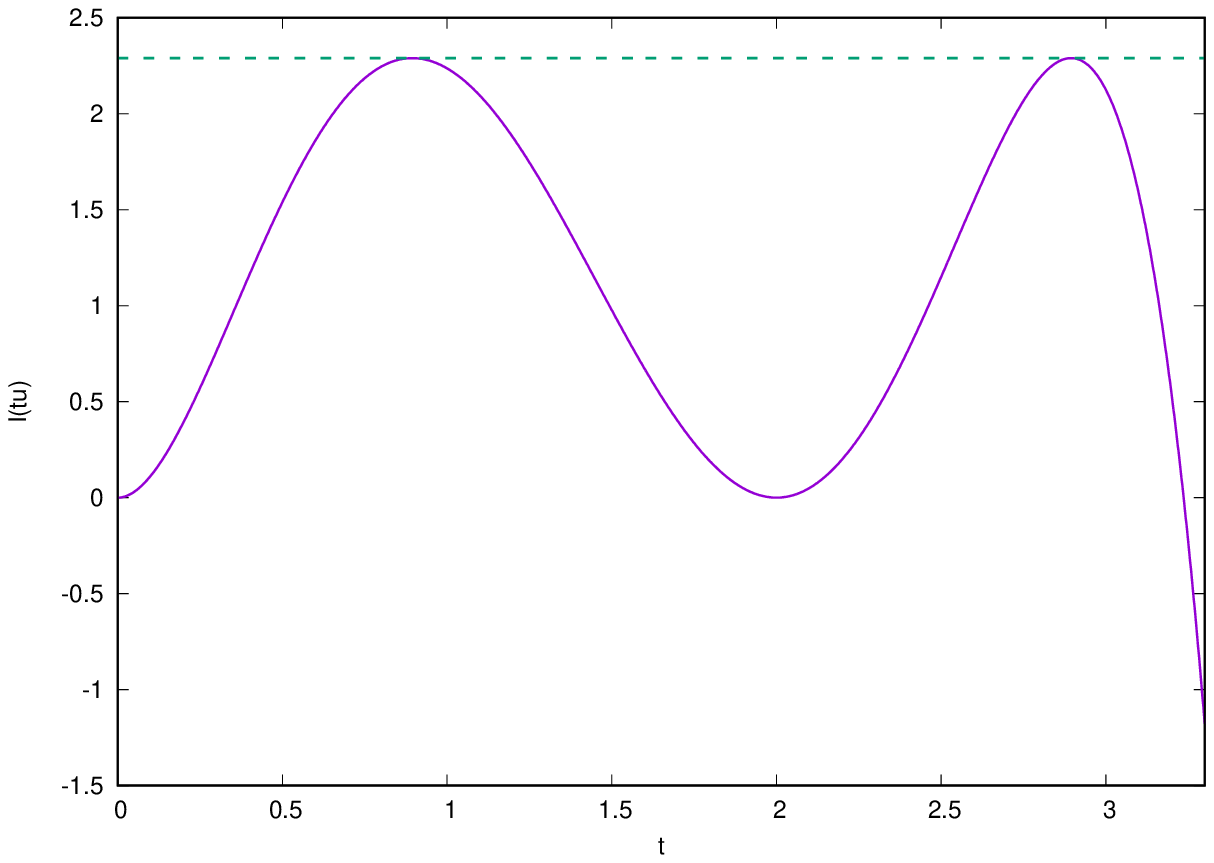}
	\caption{An example of function $u$ (left) for which $I(tu)$ has two maxima (right), with $f(u)$ as in (\ref{nonlinearity_nehari}).}
	\label{u_counter}
\end{figure}

\begin{figure}[h!]
	\centering
	\includegraphics[width=0.7\linewidth]{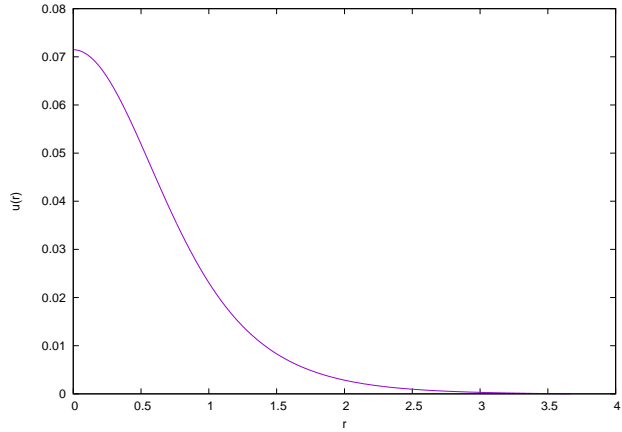}
	\caption{Profile of solution for the nonlinearity (\ref{nonlinearity_nehari}) for $\lambda = 3.0$.}
	\label{counter_nehari_sol}
\end{figure}

\section{Concluding remarks}

The algorithm presented in this paper is based in a novel approach of finding a critical point of a functional associated to the Euler equation, which may model Physical problems, by constrained minimization method in the appropriate Pohozaev manifold. The main advantage is that it can tackle asymptotically linear as well as superlinear problems with no assumption of monotonicity on $f(u)/u$. This improves previous results by solving for those problems already studied and complementing with new problems which could not be treated by the preceding algorithms in the literature.

The example

\begin{equation}
	f(u)= \displaystyle\frac{u^7 - \frac{5}{2}u^5 + 2u^3}{1 + su^6},
\end{equation}
shown in Figure \ref{monotonicity_f} (left), does not satisfy the monotonicity condition of $f(u)/u$, shown in Figure \ref{monotonicity_f} (right), increasing in the variable $u$, for $u>0$.
However, projections on the Pohozaev manifold can be performed, hence MMAP can be applied.

\begin{figure}[h!]
  \centering
      \includegraphics[width=0.49\linewidth]{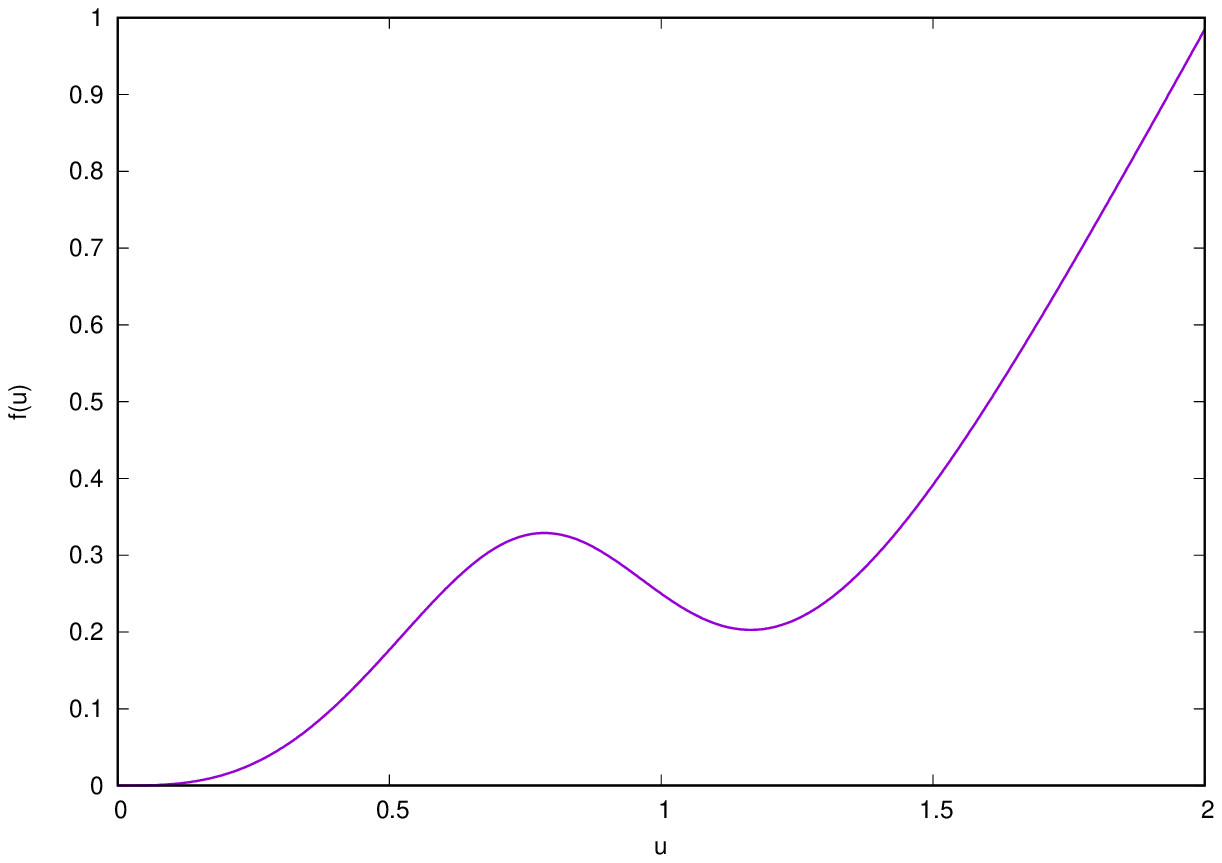}
      \includegraphics[width=0.49\linewidth]{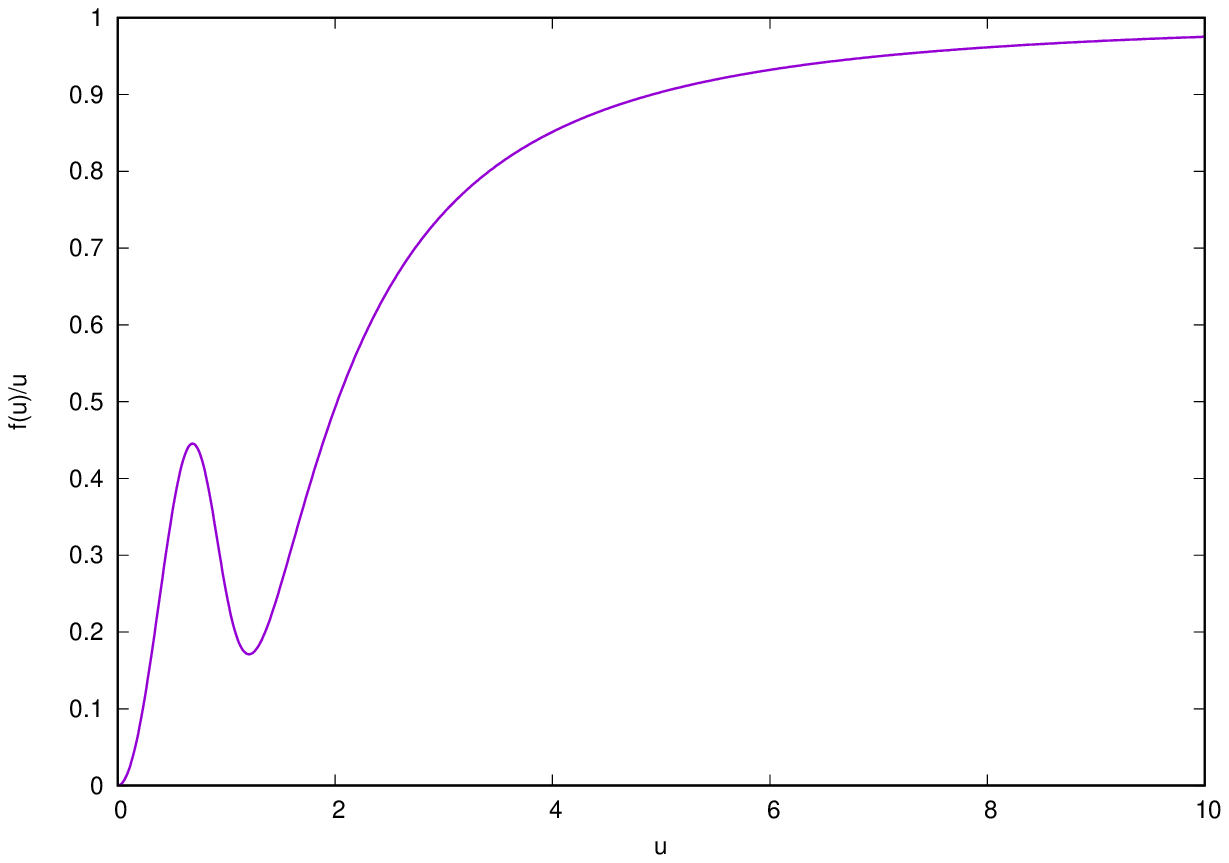}
	\caption{Example of a nonlinearity for which the monotonicity condition does not hold. $f(u)$ (left) and $f(u)/u$ (right).}
	\label{monotonicity_f}
\end{figure}

The theoretical backing of this algorithm is the variational method where the associated functional $I$ is defined on the Hilbert space $H^1(\Rn)$, which is continuously embedded in $L^{2^*}(\Rn)$. Hence, critical and supercritical nonlinear terms, 
$\lim_{u \to +\infty} f(u)/u^p=+\infty$, with $p\geq2^*-1$, cannot be accessed.
Even further, in using ordinary differential equations for finding radial solutions of the problem, another method would be needed in order to arrive at the ground state solution.

\clearpage

%\begin{acknowledgements}
%If you'd like to thank anyone, place your comments here
%and remove the percent signs.
%\end{acknowledgements}

% BibTeX users please use one of
%\bibliographystyle{spbasic}      % basic style, author-year citations
%\bibliographystyle{spmpsci}      % mathematics and physical sciences
%\bibliographystyle{spphys}       % APS-like style for physics
%\bibliography{}   % name your BibTeX data base

\begin{thebibliography}{}

\bibitem {AR} Ambrosetti, A.,  Rabinowitz, P.H.:  Dual variational methods in critical point theory and applications. J. Functional Analysis. {\bf 14},  349--381 (1973)

\bibitem {AE}  Aubin, J., Ekeland, I.: Applied Nonlinear Analysis, Wiley, New York (1984)

\bibitem {bl} Berestycki, H., Lions, P.-L.: Nonlinear scalar field equations.
  I. Existence of a ground state. Arch. Rational Mech. Anal. {\bf 82}(4),  313--345 (1983)
  
\bibitem {GJW}  Chen, G.,  Zhou J.,  Ni, W.M.:  Algorithms and visualization for solutions of nonlinear elliptic equations. International Journal of Bifurcation and Chaos {\bf 10}(7), 1565--1612 (2000)

\bibitem {YP}  Choi, Y. S., Mckenna, P. J.: A mountain pass method for the numerical solution of semilinear elliptic problems.  Nonlinear Analysis, Theory, Methods \& Applications {\bf 20}(4), 417--437  (1993)

\bibitem{Deim} Deimling, C.: Nonlinear Functional Analysis, Springer-Verlag, Berlin (1985)
  
\bibitem{Ding}  Ding,  W. Y.,  Ni, W.M.: On the existence of positive entire solution of  semi-linear elliptic equation. Arch. Rat. Mech. Anal. {\bf 91}, 283--308 (1986)

\bibitem {JH} Hor\'ak, J.: Constrained mountain pass algorithm for the numerical solution of semilinear elliptic problems. Numer. Math. {\bf 2}, 251--276  (2004)

\bibitem{JT} Jeanjean, L.,  Tanaka, K.: A remark on least energy solutions in $\mathbb{R}^N$.  Proc. Amer. Math. Soc. {\bf 131}(8), 2399--2408  (2002)

\bibitem {lini}  Li Y., Ni W. Y.: Radial symmetry of positive solutions of nonlinear elliptic equations in $\Rn$. Communications in Partial Differential Equations, {\bf 18}(5-6),  1043-1054 (1993)


%\bibitem {IBKKPK}  Merhasin, I. M.,  Malomed, B. A.,  Senthilnathan, K.,  Nakkeeran, K., Wai P. K. A., Chow,  K. W.: Solitons in Bragg gratings with saturable nonlinearities.  J. Opt. Soc. Am. B {\bf 24}(7), 1458--1468  (2007)

\bibitem{palais}  Palais R. S., Smale, S.: A generalized Morse theory. Bull. Amer. Math. Soc. {\bf 70}, 165--172 (1964)
  
\bibitem {P}  Pohozaev, S.: Eigenfunctions of the equation $\Delta u+\lambda f(u)=0$. Soviet. Math. Dokl. {\bf 6}, 1408--1411 (1995)

\bibitem{RAB}  Rabinowitz P.H.: On a class of nonlinear Schr\"odinger equations, Z. angew. Math. Phys. {\bf 43}, 270--291 (1992)

\bibitem{serrin_tang} Serrin J., Tang M.: Uniqueness of ground states for quasilinear elliptic equations. Indiana Univ. Math. J. {\bf 49}(3), 897--923 (2000)
  
\bibitem {JS}  Shatah, J.: Unstable ground state of nonlinear Klein-Gordon equations. Transactions of the American Mathematical Society {\bf 290}(2), (1985)
 
\bibitem {Wi}  Willem, M.: Minimax Theorems, Vol. 24. Birkhauser, Boston (1996)


\end{thebibliography}

% Non-BibTeX users please use

\end{document}